\documentclass[11pt]{article}
\usepackage[tbtags]{amsmath}
\usepackage{amssymb}
\usepackage{amsthm}
\usepackage[misc]{ifsym}
\usepackage{cases}
\usepackage{mathrsfs}
\usepackage{graphicx}
\usepackage{subcaption} 

\numberwithin{equation}{section}
\setlength{\textwidth}{160mm} \setlength{\textheight}{218mm}
\oddsidemargin=2 mm \topskip 0.5cm \topmargin=-0.5in
\normalsize
\title{\bf Backward Linear-Quadratic Mean Field Stochastic Differential Games: A Direct Method \thanks{This work is supported by National Key R\&D Program of China (2022YFA1006104), National Natural Science Foundations of China (12471419, 12271304), and Shandong Provincial Natural Science Foundations (ZR2024ZD35, ZR2022JQ01)}}
\author{\normalsize  Yu Si\thanks{\it School of Mathematics, Shandong University, Jinan 250100, P.R. China, E-mail: 202112003@mail.sdu.edu.cn},\quad Jingtao Shi\thanks{\it Corresponding author. School of Mathematics, Shandong University, Jinan 250100, P.R. China, E-mail: shijingtao@sdu.edu.cn}}
\date{}
\newtheorem{Proposition}{Proposition}[section]
\newtheorem{Theorem}{Theorem}[section]

\newtheorem{Lemma}{Lemma}[section]
\newtheorem{Remark}{Remark}[section]
\newtheorem{Assumption}{Assumption}[section]
\newtheorem{Problem}{Problem}[section]
\begin{document}

\maketitle

\noindent{\bf Abstract:}\quad This paper studies a linear-quadratic mean-field game of stochastic large-population system, where the large-population system satisfies a class of $N$ weakly coupled linear backward stochastic differential equation. Different from the fixed-point approach commonly used to address large population problems, we first directly apply the maximum principle and decoupling techniques to solve a multi-agent problem, obtaining a centralized optimal strategy. Then, by letting $N$ tend to infinity, we establish a decentralized optimal strategy. Subsequently, we prove that the decentralized optimal strategy constitutes an $\epsilon$-Nash equilibrium for this game. Finally, we provide a numerical example to simulate our results.

\vspace{2mm}

\noindent{\bf Keywords:}\quad Mean field game, backward stochastic differential equation, large-population system, linear-quadratic control, $\epsilon$-Nash equilibrium, direct method

\vspace{2mm}

\noindent{\bf Mathematics Subject Classification:}\quad 93E20, 60H10, 49K45, 49N80, 91A23

\section{Introduction}

Recently, the study of dynamic optimization in stochastic large-population systems has garnered significant attention. Distinguishing it from a standalone system, a large-population system comprises numerous agents, widely applied in fields such as engineering, finance and social science. In this context, the impact of a single agent is minimal and negligible, whereas the collective behaviors of the entire population are significant. All the agents are weakly coupled via the state average or empirical distribution in dynamics and cost functionals. Consequently, centralized strategies for a given agent, relying on information from all peers, are impractical. Instead, an effective strategy is to investigate the associated {\it mean-field games} (MFGs) to identify an approximate equilibrium by analyzing its limiting behavior. Along this research direction, we can obtain the decentralized strategies through the limiting auxiliary control problems and the related {\it consistency condition} (CC) system. The past developments have largely followed two routes. One route starts by formally solving an $N$-agents game to obtain a large coupled solution equation system. The next step is to derive a limit for the solution by taking $N \rightarrow \infty$ \cite{Lasry-Lions-2007}, which can be called the direct (or bottom-up) approach. The interested readers can refer to \cite{Cong-Shi-2024, Huang-Zhou-2020, Wang-2024, Wang-Zhang-Zhang-2022}. Another route is to solve an optimal control problem of a single agent by replacing the state average term with a limiting process and formalize a fixed point problem to determine the limiting process, and this is called the fixed point (or top-down) approach. This kind of method is also called {\it Nash certainty equivalence} (NCE) \cite{Huang-Caines-Malhame-2007, Huang-Malhame-Caines-2006}. The interested readers can refer to \cite{Bardi-Priuli-2014, Bensoussan-Feng-Huang-2021, Feng-Wang-2024, Hu-Huang-Li-2018, Hu-Huang-Nie-2018, Huang-Huang-2017, Huang-Wang-2016, Moon-Basar-2017, Nguyen-Huang-2012, Wang-Zhang-2017}. Further analysis of MFGs and related topics can be seen in \cite{Bensoussan-Frehse-Yam-2013, Buckdahn-Djehiche-Li-Peng-2009, Carmona-Delarue-2013, Huang-2010, Li-Sun-Xiong-2019} and the reference therein. 

A {\it backward stochastic differential equation} (BSDE) is a {\it stochastic differential equation } (SDE) with a given random terminal value. As a consequence, the solution to BSDE consists of one adapted pair $(y(\cdot),z(\cdot))$. Here, the second component $z(\cdot)$ is introduced to ensure the adaptiveness of $y(\cdot)$. The linear BSDE was firstly introduced by \cite{Bismut-1978}. Then, \cite{Pardoux-Peng-1990} generalized the nonlinear case. In fact, mean-field problems driven by backward systems can be used to solve economic models with recursive utilities and cooperative relations (\cite{Du-Huang-Wu-2018, Feng-Huang-Wang-2021,  Huang-Wang-Wu-2016, Huang-Wang-Wang-Wang-2023, Li-Wu-2023}). One example is the production planning problem for a given minimum terminal, where the goal is to maximize the sum of product revenue. Another example is the hedging model of pension funds. In this case, we usually consider many types of pension funds and minimize the sum of the model risks. 

In this paper, we consider a class of {\it linear-quadratic} (LQ) mean-field games with backward stochastic large-population system. Compared with the existing literature, the contributions of this paper are listed as follows.
\begin{itemize}
  \item The LQ backward MFG is introduced to a general class of weakly-coupled backward stochastic system. The second part $z_{ij}(\cdot)$ of the solution of state equation is introduced to ensure the adaptiveness of $x_i(\cdot)$, which also enters the cost functional. 
  \item Different from the fixed-point approach commonly used to address large population problems, we adopts the direct approach to solve the problem. Apply the maximum principle to solve a multi-agent problem, the optimal centralized strategy can be represented via the Hamiltonian system and adjoint process. And we introduce some Riccati equations, an SDE and a BSDE to obtain linear feedback form of centralized strategies. As $N$ tend to infinity, we obtain the decentralized strategy. 
  \item We give numerical simulations of the optimal state and optimal decentralized strategy to demonstrate the feasibility of our theoretical results.
\end{itemize}

The rest of this paper is organized as follows. In Section 2, we formulate our problem. In Section 3, we design the decentralized strategy. In Section 4, we prove the decentralized optimal strategies are the $\epsilon$-Nash equilibria of the games. In Section 5, we give a numerical example. Finally, the conclusion is given in Section 6.

\section{Problem formulation}

Firstly, we introduce some notations that will be used throughout the paper. We consider a finite time interval $[0, T]$ for a fixed $T > 0$. Let $\left(\Omega, \mathcal{F}, \left\{\mathcal{F}_t\right\}_{t\geq0}, \mathbb{P}\right)$ be a complete filtered probability space, on which a standard $N$-dimensional Brownian motion $\left\{W_i(t), 1 \leq i \leq N\right\}_{t \geq 0}$ is defined, and $\{\mathcal{F}_t\}$ is defined as the complete information of the system at time $t$. That is, for any $t \geq 0$, we have
$$
\mathcal{F}_t:=\sigma\left\{W_i(s), 1 \leq i \leq N, 0 \leq s \leq t\right\}.
$$

Let $\mathbb{R}^n$ be an $n$-dimensional Euclidean space with norm and inner product being defined as $|\cdot|$ and $\langle\cdot, \cdot\rangle$, respectively. Next, we introduce three spaces. A bounded, measurable function $f(\cdot):[0, T] \rightarrow \mathbb{R}^n$ is denoted as $f(\cdot) \in L^\infty(0, T; \mathbb{R}^n)$. An $\mathbb{R}^n$-valued, $\mathcal{F}_t$-adapted stochastic process $f(\cdot): \Omega \times [0, T] \rightarrow \mathbb{R}^n$ satisfying $\mathbb{E} \int_0^T |f(t)|^2 dt < \infty$ is denoted as $f(\cdot) \in L_{\mathcal{F}}^2(0, T; \mathbb{R}^n)$. An $\mathbb{R}^n$-valued, $\mathcal{F}_T$-measurable random variable $\xi$ with $\mathbb{E} \xi^2 < \infty$ is denoted as $\xi \in L_{\mathcal{F}_T}^2(\Omega, \mathbb{R}^n)$.

For any random variable or stochastic process $X$ and filtration $\mathcal{H}$, $\mathbb{E}X$ represent the mathematical expectation of $X$. For a given vector or matrix \(M\), let \(M^{\top}\) represent its transpose. We denote the set of symmetric \(n \times n\) matrices (resp. positive semi-definite matrices) with real elements by \(\mathcal{S}^n\) (resp. \(\mathcal{S}_{+}^n\)). If \(M \in \mathcal{S}^n\) is positive (semi) definite, we abbreviate it as \(M > (\geq) 0\). For a positive constant \(k\), if \(M \in \mathcal{S}^n\) and \(M > kI\), we label it as \(M \gg 0\).

Now, let us focus on a large population system comprised of $N$ individual agents, denoted as $\left\{\mathcal{A}_i\right\}_{1 \leq i \leq N}$. The state $x_i(\cdot)\in \mathbb{R}^n$ of the agent $\mathcal{A}_i$ is given by the following linear $\mathrm{BSDE}$:
\begin{equation}\label{state}
\left\{\begin{aligned}
	d x_i(t)&=\left[A(t) x_i(t)+B(t) u_i(t)\right] d t+\sum_{j=1}^{N}z_{ij}(t) d W_j, \\
	x_i(T)&=\xi_i,
\end{aligned}\right.
\end{equation}
where $u_i(\cdot) \in \mathbb{R}^k$ is the control process of agent $\mathcal{A}_i$, and $\xi_i \in L_{\mathcal{F}_T}^2(\Omega, \mathbb{R}^n)$ represents the terminal state, the coefficients $A(\cdot)$, $B(\cdot)$ are deterministic functions with compatible dimensions. Noting that $\{z_{ij}(\cdot),1 \leq i,j \leq N\}$ are also the solution of (\ref{state}), which are introduced to to ensure the adaptability of $x_i(\cdot)$.

The cost functional of agent $\mathcal{A}_i$ is given by
\begin{equation}\label{cost}
\begin{aligned}
	\mathcal{J}_i\left(u_i(\cdot); u_{-i}(\cdot)\right)&=\frac{1}{2} \mathbb{E}\left[\int_0^T\left(\left\|x_i(t)-\Gamma_1(t) x^{(N)}(t)-\eta_1(t)\right\|_Q^2+\left\|u_i(t)\right\|_R^2+\sum_{j=1}^N\left\|z_{ij}(t)\right\|_{S_j}^2\right) d t\right. \\
	&\qquad\quad +\left\|x_i(0)-\Gamma_0 x^{(N)}(0)-\eta_0\right\|_G^2\Bigg],
\end{aligned}
\end{equation}
where $\left\|u_i(t)\right\|_R^2\equiv\left\langle R(t)u_i(t),u_i(t)\right\rangle$, etc., and $Q(\cdot),R(\cdot),S_j(\cdot),\Gamma_1(\cdot),\eta_1(\cdot)$ are deterministic functions with compatible dimensions. Let $\mathcal{F}_t^i=\sigma\left(W_i(s), 0 \leqslant s \leqslant t \right)$. Define the centralized control set of agent $\mathcal{A}_i$ as 
$$
\mathcal{U}^c_i= \bigg\{u_i(\cdot) \mid u_i(t) \text { is adapted to } \mathcal{F}_t \text { and } 
\mathbb{E} \int_0^T\left|u_i(t)\right|^2 d t<\infty\bigg\},
$$
and the decentralized control set of agent $\mathcal{A}_i$ as 
$$
\mathcal{U}^d_i= \bigg\{u_i(\cdot) \mid u_i(t) \text { is adapted to } \mathcal{F}^i_t \text { and }
\mathbb{E} \int_0^T\left|u_i(t)\right|^2 d t<\infty\bigg\}.
$$

In this section, we mainly study the two problems:
\begin{Problem}\label{problem centralized}
	Seek a Nash equilibrium strategy $u^*(\cdot)=(u^*_1(\cdot),\ldots,u^*_N(\cdot))$, $u^*_i(\cdot) \in \mathcal{U}^c_i$ for the system (\ref{state})-(\ref{cost}), i.e., $\mathcal{J}_i(u^*_i(\cdot);u^*_{-i}(\cdot)) = \inf\limits_{u_i(\cdot) \in \mathcal{U}^c_i} \mathcal{J}_i(u_i(\cdot);u^*_{-i}(\cdot))$, $i=1,\ldots,N$.
\end{Problem}

\begin{Problem}\label{problem decentralized}
	For $\epsilon>0$, seek an $\epsilon$-Nash equilibrium strategy $u^*(\cdot)=(u^*_1(\cdot),\ldots,u^*_N(\cdot))$, $u^*_i(\cdot) \in \mathcal{U}^d_i$ for the system (\ref{state})-(\ref{cost}), i.e., $\mathcal{J}_i(u^*_i(\cdot);u^*_{-i}(\cdot)) \leq  \inf\limits_{u_i(\cdot) \in \mathcal{U}^d_i} \mathcal{J}_i(u_i(\cdot);u^*_{-i}(\cdot))+\epsilon$, $i=1,\ldots,N$.
\end{Problem}

Next, we introduce the following assumptions.
\begin{Assumption}\label{A1}
	The coefficients satisfy the following conditions:
	
	(i) $A(\cdot), \Gamma_1(\cdot) \in L^\infty\left(0, T ; \mathbb{R}^{n \times n}\right)$, and $B(\cdot) \in L^\infty(0, T$; $\left.\mathbb{R}^{n \times k}\right) ;$
	
	(ii) $Q(\cdot),S_j(\cdot) \in L^\infty\left(0, T ; \mathbb{S}^n\right)$, $R(\cdot) \in L^\infty\left(0, T ; \mathbb{S}^k\right)$, and $R(\cdot)>0$, $Q(\cdot) \geq 0$, $S_j(\cdot) \geq 0$;
	
	(iii) $\eta_1(\cdot) \in L^2\left(0, T ; \mathbb{R}^n\right)$;
	
	(iv) $\Gamma_0 \in \mathbb{R}^{n \times n}$, $\eta_0 \in \mathbb{R}^n$, $G \in \mathbb{S}^n$ are bounded and $G \geq 0$.
\end{Assumption} 

\begin{Assumption}\label{A2}
	The terminal conditions $\left\{\xi_i \in L_{\mathcal{F}_T}^2\left(\Omega ; \mathbb{R}^n\right), i=1,2, \cdots, N\right\}$ are identically distributed and mutually independent. There exists a constant $c_0$ (independent of $N$) such that $\max\limits_{1 \leq i \leq N} \mathbb{E}\left|\xi_i\right|^2<c_0$.
\end{Assumption}

\section{Design of the decentralized strategies}

\begin{Lemma}
	Let (\ref{A1}) and (\ref{A2}) hold, then $\mathcal{J}_i(u(\cdot);u^*_{-i}(\cdot))$, $i=1,\ldots,N$ is a strictly convex functional.
\end{Lemma}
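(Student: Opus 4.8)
The plan is to show that $u_i(\cdot)\mapsto\mathcal{J}_i(u_i(\cdot);u^*_{-i}(\cdot))$ is strictly convex by exploiting the affine dependence of the state trajectory on the control together with the coercivity of the running cost in $u_i$. First I would fix the agent $\mathcal{A}_i$ and the other players' strategies $u^*_{-i}(\cdot)$, and observe that the map from $u_i(\cdot)\in\mathcal{U}^c_i$ to the solution $(x_i(\cdot),\{z_{ij}(\cdot)\}_j)$ of the linear BSDE \eqref{state} is affine: writing $u_i=\lambda u_i^1+(1-\lambda)u_i^2$ with $\lambda\in[0,1]$, linearity of \eqref{state} and uniqueness of BSDE solutions give $x_i=\lambda x_i^1+(1-\lambda)x_i^2$ and $z_{ij}=\lambda z_{ij}^1+(1-\lambda)z_{ij}^2$, where superscripts index the responses to $u_i^1,u_i^2$. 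Here I would note that because only agent $\mathcal{A}_i$'s control is being varied while $u^*_{-i}$ is frozen, the state average $x^{(N)}(\cdot)$ appearing in \eqref{cost} is \emph{not} fixed; however, it too depends affinely on $u_i(\cdot)$ through $x_i(\cdot)$, so the combination $x_i-\Gamma_1 x^{(N)}-\eta_1$ and $x_i(0)-\Gamma_0 x^{(N)}(0)-\eta_0$ are each affine in $u_i(\cdot)$.

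Next I would plug this affine structure into the quadratic cost \eqref{cost}. Each term is of the form $\|L(u_i)+c\|_M^2$ with $L$ linear, $c$ a fixed process, and $M\in\{Q,R,S_j,G\}$ positive semidefinite (and $R>0$); a standard computation gives, for the convex combination, the parallelogram-type identity
\begin{equation*}
\big\|L(u_i)+c\big\|_M^2\Big|_{u_i=\lambda u_i^1+(1-\lambda)u_i^2}=\lambda\big\|L(u_i^1)+c\big\|_M^2+(1-\lambda)\big\|L(u_i^2)+c\big\|_M^2-\lambda(1-\lambda)\big\|L(u_i^1)-L(u_i^2)\big\|_M^2.
\end{equation*}
Summing the contributions from the $Q$-, $G$- and $S_j$-terms (all $\geq 0$) and the $R$-term, and taking expectation, yields
\begin{equation*}
\mathcal{J}_i\big(\lambda u_i^1+(1-\lambda)u_i^2;u^*_{-i}\big)\leq\lambda\mathcal{J}_i(u_i^1;u^*_{-i})+(1-\lambda)\mathcal{J}_i(u_i^2;u^*_{-i})-\tfrac{1}{2}\lambda(1-\lambda)\,\mathbb{E}\!\int_0^T\!\big\|u_i^1(t)-u_i^2(t)\big\|_{R(t)}^2\,dt,
\end{equation*}
since the cross term from the $Q$/$G$/$S_j$ parts is nonnegative and can be discarded. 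Because $R(\cdot)>0$ and, under Assumption \ref{A1}(ii), $R(\cdot)$ is in fact uniformly positive (bounded measurable and positive-definite on a compact interval, hence $R(\cdot)\geq\delta I$ for some $\delta>0$), the last term is strictly negative whenever $\lambda\in(0,1)$ and $u_i^1\neq u_i^2$ in $L^2_{\mathcal{F}}(0,T;\mathbb{R}^k)$. This is exactly strict convexity.

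The only genuine point requiring care — the "main obstacle," though it is modest — is justifying that the running weight on the control gives a uniformly positive lower bound, i.e. upgrading $R(\cdot)>0$ pointwise to $R(\cdot)\geq\delta I$ for some $\delta>0$, which is needed so that $u_i^1\neq u_i^2$ as an $L^2$ element forces a strictly positive defect; this follows from $R(\cdot)\in L^\infty(0,T;\mathbb{S}^k)$ together with positivity, but should be stated. A secondary bookkeeping point is to confirm that varying $u_i$ alone (with $u^*_{-i}$ fixed) still leaves every term affine in $u_i$: this is immediate because $x^{(N)}=\frac1N\sum_{j}x_j$ and only $x_i$ moves. I would then conclude that for any $u_i^1\neq u_i^2$ and $\lambda\in(0,1)$ the strict inequality holds, which is the definition of strict convexity of $\mathcal{J}_i(\cdot;u^*_{-i})$ on $\mathcal{U}^c_i$, and remark that the same argument applies verbatim with $\mathcal{U}^c_i$ replaced by $\mathcal{U}^d_i$.
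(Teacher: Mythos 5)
Your argument is correct and is the standard one the paper is implicitly invoking (the paper itself gives no proof, only a pointer to \cite{Du-Huang-Wu-2018}): the control-to-state map $u_i\mapsto(x_i,\{z_{ij}\}_j)$ is affine by linearity and uniqueness for the BSDE, each cost term is a squared seminorm of an affine function of $u_i$ with $Q,S_j,G\geq 0$, and the $R$-term supplies the strict convexity defect $\tfrac12\lambda(1-\lambda)\mathbb{E}\int_0^T\|u_i^1-u_i^2\|_R^2\,dt$. One small correction on the point you single out as the main obstacle: upgrading $R(\cdot)>0$ to $R(\cdot)\geq\delta I$ does \emph{not} follow from boundedness alone (Assumption \ref{A1} only puts $R$ in $L^\infty$, not $C([0,T])$, so $\inf_t\lambda_{\min}(R(t))$ could be $0$), but you do not actually need it: since $\langle R(t)v,v\rangle>0$ for every $v\neq 0$ and a.e.\ $t$, the integrand is strictly positive on the positive-measure set where $u_i^1\neq u_i^2$, which already makes the defect term strictly positive and yields strict convexity. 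The uniform bound $R\geq\delta I$ would only be needed for the \emph{uniform} convexity invoked in the Remark following the Lemma.
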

\begin{proof}
	The proof is similar to \cite{Du-Huang-Wu-2018}, and we will not repeat it.
\end{proof}
\begin{Remark}
	If $J_i(u(\cdot);u^*_{-i}(\cdot))$, $i=1,\ldots,N$ is uniformly convex, then Problem (\ref{problem centralized}) exists a unique Nash equilibrium strategy $u^*(\cdot)=(u^*_1(\cdot),\ldots,u^*_N(\cdot))$.
\end{Remark}

We first obtain the necessary and sufficient conditions for the existence of centralized optimal control of Problem (\ref{problem centralized}). For notational simplicity, the time variable $t$ is often omitted.

\begin{Theorem}
	Assume (\ref{A1}) and (\ref{A2}) hold. Then Problem (\ref{problem centralized}) has a Nash equilibrium strategy $u^*(\cdot)=(u^*_1(\cdot),\ldots,u^*_N(\cdot))$, $u^*_i(\cdot) \in \mathcal{U}^c_i$ if and only if the following Hamiltonian system admits a set of solutions $(x^*_i(\cdot),z^*_{ij}(\cdot),p_i(\cdot),i,j=1,\ldots,N)$: 
	\begin{equation}\label{optimality Hamiltonian system}
		\left\{\begin{aligned}
			d x_i^*(t)&=\left[A(t) x_i^*(t)+B(t) u_i^*(t)\right] d t+\sum_{j=1}^N z_{ij}^*(t) d W_j(t), \\
			d p_i(t)&=-\left[A(t)^\top p_i(t)+\left(I_n-\frac{\Gamma_1(t)}{N}\right)^\top Q(t)\left(x_i^*(t)-\Gamma_1(t) x^{*(N)}(t)-\eta_1(t)\right)\right] d t\\
            &\quad -\sum_{j=1}^N S_j(t) z_{ij}^*(t) d W_j(t), \\
			x^*_i(T)&=\xi_i, \quad p_i(0)=-\left(I_n-\frac{\Gamma_0}{N}\right)^{\top} G\left(x_i^*(0)-\Gamma_0 x^{*(N)}(0)-\eta_0\right),
		\end{aligned}\right.
	\end{equation}
	and the centralized strategy $u^*_i(\cdot)$ satisfies the stationary condition:
	\begin{equation}\label{optimality conditions}
		u^*_i(t)=-R^{-1}(t)B(t)^\top p_i(t),\quad t\in[0,T].
	\end{equation}
	
\end{Theorem}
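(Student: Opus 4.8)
The plan is to reduce, for each fixed index $i$, the Nash equilibrium requirement to a single convex optimal control problem for the backward state $x_i(\cdot)$ with the rivals' strategies frozen, and then to characterize its minimizer through a variational duality identity; convexity will make the resulting first-order condition both necessary and sufficient.

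First I would fix $i$ and freeze $u^*_{-i}(\cdot)$. The states $(x^*_l(\cdot),z^*_{lj}(\cdot))_{l\neq i}$ then solve their own linear BSDEs and are unaffected by $u_i(\cdot)$, so $\mathcal{J}_i(u_i(\cdot);u^*_{-i}(\cdot))$ becomes a functional of $u_i(\cdot)$ alone, in which $x^{(N)}(\cdot)=\frac{1}{N}x_i(\cdot)+\frac{1}{N}\sum_{l\neq i}x^*_l(\cdot)$ depends on $u_i(\cdot)$ only through $x_i(\cdot)$. By the Lemma this functional is strictly convex on the linear space $\mathcal{U}^c_i$, so $u^*_i(\cdot)$ is its minimizer if and only if the Gateaux derivative vanishes in every admissible direction $v(\cdot)\in\mathcal{U}^c_i$. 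To compute that derivative I would introduce the variational BSDE $d\tilde x_i=[A\tilde x_i+Bv]\,dt+\sum_j\tilde z_{ij}\,dW_j$, $\tilde x_i(T)=0$ (a standard linear BSDE), whose solution is the directional derivative of $x_i(\cdot)$; since only $x_i$ varies, the directional derivative of $x^{(N)}$ is $\frac{1}{N}\tilde x_i$, and this is precisely the mechanism that produces the weights $(I_n-\Gamma_1/N)^\top$ and $(I_n-\Gamma_0/N)^\top$ when the quadratic terms of (\ref{cost}) are differentiated.

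Next I would introduce $p_i(\cdot)$ as the solution of the forward SDE in (\ref{optimality Hamiltonian system}), with the stated initial value evaluated along the optimal data, and apply It\^o's formula to $\langle p_i(t),\tilde x_i(t)\rangle$. Integrating over $[0,T]$ and taking expectation, the boundary term collapses to $-\mathbb{E}\langle p_i(0),\tilde x_i(0)\rangle$ because $\tilde x_i(T)=0$ — the backward-state analogue of the usual integration by parts — while the drift and diffusion of $p_i$ are chosen exactly so that every $\tilde x_i$ and $\tilde z_{ij}$ contribution cancels against the matching term in the Gateaux derivative. What remains is $\mathbb{E}\int_0^T\langle R u^*_i+B^\top p_i,\,v\rangle\,dt$.

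Finally, this expression vanishes for all $v(\cdot)\in\mathcal{U}^c_i$ if and only if $Ru^*_i+B^\top p_i=0$ for a.e. $t$, $\mathbb{P}$-a.s., which, since $R>0$, is exactly (\ref{optimality conditions}); collecting the state BSDE, the adjoint SDE and this relation for $i=1,\dots,N$ yields the Hamiltonian system (\ref{optimality Hamiltonian system}). For the converse, given a solution of that system one sets $u^*_i(\cdot):=-R^{-1}B^\top p_i(\cdot)$, which belongs to $\mathcal{U}^c_i$ since $R^{-1}$ and $B$ are bounded and $p_i\in L_{\mathcal{F}}^2(0,T;\mathbb{R}^n)$, and reverses the computation to see that the Gateaux derivative of $\mathcal{J}_i(\cdot;u^*_{-i}(\cdot))$ at $u^*_i(\cdot)$ vanishes, so by strict convexity $u^*_i(\cdot)$ is the unique best response for every $i$ and $u^*(\cdot)$ is a Nash equilibrium. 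The one genuinely delicate point is the bookkeeping in the It\^o duality between the backward state and the forward adjoint — keeping track of signs and of the self-interaction term $\frac{1}{N}\Gamma_1 x_i$ responsible for the $(I_n-\Gamma_1/N)^\top$ and $(I_n-\Gamma_0/N)^\top$ factors; well-posedness of the variational BSDE and of the adjoint SDE under Assumption \ref{A1} is routine linear theory.
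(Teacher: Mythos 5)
Your proposal follows essentially the same route as the paper's proof: freeze the rivals' strategies, perturb $u_i(\cdot)$ to obtain the variational BSDE with zero terminal condition, apply It\^o's formula to the pairing of the variational state with the forward adjoint process $p_i(\cdot)$ so that everything collapses to $\mathbb{E}\int_0^T\langle Ru^*_i+B^\top p_i, v\rangle\,dt$, and invoke convexity. If anything, your sketch is slightly more complete, since you spell out the sufficiency direction (setting $u^*_i=-R^{-1}B^\top p_i$ and using strict convexity), whereas the paper only writes out the necessity half explicitly.
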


\begin{proof}
	Suppose $u^*(\cdot)=(u^*_1(\cdot),\ldots,u^*_N(\cdot))$ is a Nash equilibrium strategy of Problem (\ref{problem centralized}) and $(x^*_i(\cdot),z^*_{ij}(\cdot),i,j=1,\ldots,N)$ are the
	corresponding optimal trajectories. For any $u_i(\cdot) \in \mathcal{U}^c_i$ and $\forall$ $\varepsilon>0$, we denote 
	$$
	u_i^\varepsilon(\cdot)=u^*_i(\cdot)+\varepsilon v_i(\cdot) \in \mathcal{U}^c_i,
	$$
	where $v_i(\cdot)=u_i(\cdot)-u^*_i(\cdot)$.
	
	Let $(x^\varepsilon_i(\cdot),z^\varepsilon_{ij}(\cdot),i,j=1,\ldots,N)$ be the solution of the following perturbed state equation
	$$
	\left\{\begin{aligned}
		dx_i^\varepsilon=&\left(A x_i^\varepsilon+B u_i^\varepsilon\right) d t+\sum_{j=1}^Nz_{ij}^\varepsilon d W_j, \\
		x_i^\varepsilon(T)&=\xi_i.
	\end{aligned}\right.
	$$
	Let $x_i(\cdot)=\frac{x_i^\varepsilon(\cdot)-x_i^*(\cdot)}{\varepsilon},  z_{ij}(\cdot)=\frac{z_{ij}^\varepsilon(\cdot)-z_{ij}^*(\cdot)}{\varepsilon}$. It can be verified that $(x_i(\cdot),z_{ij}(\cdot),i,j=1,\ldots,N)$ satisfies
	$$
	\left\{\begin{aligned}
		d x_i=&\left(A x_i+B v_i\right) d t+\sum_{j=1}^{N}z_{ij} d W_j, \\
		x_i(T)&=0.
	\end{aligned}\right.
	$$
	Applying It\^o’s formula to $\left\langle x_i(\cdot), p_i(\cdot)\right\rangle$, we derive
	\begin{equation}\label{Ito maximum}
	\begin{aligned}
		&\mathbb{E}\left[0-\left\langle x_i(0),\left(I_n-\frac{\Gamma_0}{N}\right)^{\top} G\left(x^*_i(0)-\Gamma_0 x^{*(N)}(0)-\eta_0\right)\right\rangle\right]\\
        &=\mathbb{E} \int_0^T\left\langle B v_i, p_i\right\rangle
		-\left\langle x_i,\left(I_n-\frac{\Gamma_1}{N}\right)^T Q\left(x^*_i-\Gamma_1 x^{*(N)}-\eta_1\right)\right\rangle-\sum_{j=1}^{N}\left\langle z_{ij}, S_j z^*_{ij}\right\rangle d t.
	\end{aligned}
	\end{equation}
	Then
	$$
	\begin{aligned}
		 \mathcal{J}_i\left(u_i^\varepsilon(\cdot); u^*_{-i}(\cdot)\right)-\mathcal{J}_i\left(u^*_i (\cdot); u^*_{-i}(\cdot)\right)=\frac{\varepsilon^2}{2} X_1+\varepsilon X_2,
	\end{aligned}
	$$
	where 
	$$
	\begin{aligned}
		X_1&= \mathbb{E}\left[\int_0^T \left[\left(\left(I_n-\frac{\Gamma_1}{N}\right) x_i\right)^\top Q\left(\left(I_n-\frac{\Gamma_1}{N}\right) x_i\right)
        +v_i^\top R v_i+\sum_{j=1}^{N}z_{ij}^\top S_j  z_{ij}\right] d t\right. \\
	    &\quad \left.\quad+\left((I_n-\frac{\Gamma_0}{N})  x_i(0)\right)^\top G\left(\left(I_n-\frac{\Gamma_0}{N}\right) x_i(0)\right)\right],
    \end{aligned}
	$$
	\begin{equation}\label{X_2}
	\begin{aligned}
		X_2&= \mathbb{E}{\left[\int_0^T\left[\left(x^*_i-\Gamma_1 x^{*(N)}-\eta_1\right)^\top Q\left(\left(I_n-\frac{\Gamma_1}{N}\right) x_i\right)+u_i^{*\top} R v_i+\sum_{j=1}^{N}z_{ij}^{*\top} H z_{ij} d t\right]\right.} \\
		&\quad \left.\quad+\left(x_i^*(0)-\Gamma_0 x^{*(N)}(0)-\eta_0\right)^\top G\left(\left(I_n-\frac{\Gamma_0}{N}\right) x_i(0)\right)\right].	
	\end{aligned}
	\end{equation}
	Due to the optimality of $u^*_i(\cdot)$, we have $\mathcal{J}_i\left(u_i^\varepsilon(\cdot); u^*_{-i}(\cdot)\right)-\mathcal{J}_i\left(u^*_i(\cdot); u^*_{-i}(\cdot)\right) \geqslant 0$. Noticing $X_1 \geqslant 0$ and the arbitrariness of $\varepsilon$, we have $X_2=0$. Then, simplifying (\ref{X_2}) with (\ref{Ito maximum}), we have
	$$
	X_2=\mathbb{E} \int_0^T\left\langle B^\top p_i+R u^*_i, v_i\right\rangle d t.
	$$
	Due to the arbitrariness of $v_i(\cdot)$, we obtain the optimal conditions (\ref{optimality conditions}).
\end{proof}

Note that optimality conditions (\ref{optimality conditions}) are open-loop centralized strategies. The next step is to obtain proper form for the centralized feedback representation of optimality conditions. Since state equation is backward, we divide the decoupling procedure into two steps, inspired by \cite{Lim-Zhou-2001}.

\begin{Proposition}
	Let Assumption (\ref{A1}), (\ref{A2}) hold. Let $(x^*_i(\cdot),z^*_{ij}(\cdot),p_i(\cdot),i,j=1,\ldots,N)$ be the solution of FBSDE (\ref{optimality Hamiltonian system}). Then, we have the following relations:
	\begin{equation}
		\left\{\begin{aligned}
			x^*_i(t)&=\Sigma(t) p_i(t)+K(t) p^{(N)}(t)+\varphi_i(t),\\
			z^*_{ij}(t)&=\left(I_n+\Sigma(t) S_j(t)\right)^{-1} \beta_{i j}(t)-\frac{K_1(t)}{N} \sum_{i=1}^{N} \beta_{i j}(t),
		\end{aligned}\right.
	\end{equation}
	where
	$$
	\begin{aligned}
	    K_1(t)=\left(I_n+\Sigma(t) S_j(t)\right)^{-1} K(t)S_j(t) \left(I_n+\Sigma(t) S_j(t)+K(t) S_j(t)\right)^{-1},    
    \end{aligned}
	$$
	and $\Sigma(\cdot),K(\cdot),\varphi_i(\cdot),\beta_{i j}(\cdot)$ are solutions of the following equations, respectively:
	\begin{equation}\label{Sigma equation}
		\left\{\begin{aligned}
			&\dot{\Sigma}-A \Sigma-\Sigma A^\top-\Sigma\left(I_n-\frac{\Gamma_1}{N}\right)^\top Q \Sigma+B R^{-1} B^\top=0, \\
			&\Sigma(T)=0,
		\end{aligned}\right.
	\end{equation}
	\begin{equation}\label{K equation}
		\left\{\begin{aligned}
			& \dot{K}-A K-K A^{\top}-\Sigma\left(I_n-\frac{\Gamma_1}{N}\right) ^\top Q K-K\left(I_n-\frac{\Gamma_1}{N}\right)^{\top} Q(\Sigma+K)  \\
			& +(K+\Sigma)\left(I_n-\frac{\Gamma_1}{N}\right)^\top Q \Gamma_1(\Sigma+K)=0, \\
			& K(T)=0,
		\end{aligned}\right.
	\end{equation}
	\begin{equation}\label{varphi equation}
		\left\{\begin{aligned}
			 d \varphi_i&=\left[A \varphi_i+\Sigma\left(I_n-\frac{\Gamma_1}{N}\right)^{\top} Q \varphi_i-(\Sigma+K)\left(I_n-\frac{\Gamma_1}{N}\right)^\top Q \Gamma_1 \varphi^{(N)}\right. \\
			&\quad \left.+K\left(I_n-\frac{\Gamma_1}{N}\right)^\top Q \varphi^{(N)}-(\Sigma+K)\left(I_n-\frac{\Gamma_1}{N}\right)^\top Q \eta_1\right]dt+\sum_{j=1}^N \beta_{i j} d W_j,\\
			\varphi_i(T)&=\xi_i.
		\end{aligned}\right.
	\end{equation}
\end{Proposition}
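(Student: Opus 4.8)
The plan is a decoupling-by-ansatz argument in the spirit of \cite{Lim-Zhou-2001}: since the state equation in (\ref{optimality Hamiltonian system}) is backward whereas the adjoint $p_i$ carries an initial condition, one expresses the former through the latter. I would posit $x_i^*(t)=\Sigma(t)p_i(t)+K(t)p^{(N)}(t)+\varphi_i(t)$ with $\Sigma,K$ deterministic matrix functions and $\varphi_i$ an adapted process with $d\varphi_i=(\cdots)\,dt+\sum_{j=1}^N\beta_{ij}\,dW_j$ and $\varphi_i(T)=\xi_i$ to be determined. Averaging over $i$ gives $x^{*(N)}=(\Sigma+K)p^{(N)}+\varphi^{(N)}$, which is used to eliminate $x^{*(N)}$. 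Then I would differentiate the ansatz by It\^o's formula, insert the dynamics of $p_i$ and of $p^{(N)}=\frac1N\sum_i p_i$ from (\ref{optimality Hamiltonian system}) together with $u_i^*=-R^{-1}B^\top p_i$ from (\ref{optimality conditions}), and replace $x_i^*$ and $x^{*(N)}$ everywhere by the ansatz; identifying the $dt$-parts and the $dW_j$-parts of the resulting identity between two It\^o processes is what pins down $\Sigma,K,\varphi_i,\beta_{ij}$.

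Concretely, the $dW_j$-matching reads $z_{ij}^*=-\Sigma S_j z_{ij}^*-K S_j\bar z_j+\beta_{ij}$, where $\bar z_j:=\frac1N\sum_{l=1}^N z_{lj}^*$; averaging over $i$ and inverting $I_n+(\Sigma+K)S_j$ gives $\bar z_j=(I_n+(\Sigma+K)S_j)^{-1}\frac1N\sum_{l=1}^N\beta_{lj}$, and substituting back while inverting $I_n+\Sigma S_j$ yields exactly the stated formula for $z_{ij}^*$ together with the expression for $K_1$. For the $dt$-matching, write $M:=I_n-\Gamma_1/N$; after the substitution the drift identity is a linear expression in $p_i$, in $p^{(N)}$, and in the remaining ($p$-free) terms. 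Equating the coefficient of $p_i$ gives the Riccati equation (\ref{Sigma equation}) for $\Sigma$ (the term $-BR^{-1}B^\top$ comes from $u_i^*=-R^{-1}B^\top p_i$); equating the coefficient of $p^{(N)}$ gives the Riccati-type equation (\ref{K equation}) for $K$, where one uses $(I_n-\Gamma_1)(\Sigma+K)=(\Sigma+K)-\Gamma_1(\Sigma+K)$ and combines $\Sigma M^\top Q\Gamma_1(\Sigma+K)+K M^\top Q\Gamma_1(\Sigma+K)=(\Sigma+K)M^\top Q\Gamma_1(\Sigma+K)$; and equating the $p$-free part identifies the drift of $\varphi_i$ in (\ref{varphi equation}). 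Finally $x_i^*(T)=\xi_i$ with $\Sigma(T)=K(T)=0$ forces $\varphi_i(T)=\xi_i$.

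To turn this formal computation into a proof I would argue by verification. Let $\Sigma$ solve (\ref{Sigma equation}), $K$ the quadratic (Riccati-type) ODE (\ref{K equation}) built from $\Sigma$, and $(\varphi_i,\beta_{ij})$ the linear mean-field BSDE (\ref{varphi equation}); set $\tilde x_i:=\Sigma p_i+Kp^{(N)}+\varphi_i$ and let $\tilde z_{ij}$ be the right-hand side of the asserted formula for $z_{ij}^*$. Applying It\^o's formula to $y_i:=x_i^*-\tilde x_i$ and invoking the three equations to cancel all $p_i$-, $p^{(N)}$-, $\varphi_i$-, $\varphi^{(N)}$- and $\eta_1$-terms, one finds that $y_i$ solves a homogeneous linear mean-field BSDE — driver linear in $y_i$ and $y^{(N)}$ with bounded coefficients, no free term, not involving the diffusion coefficient — with terminal value $y_i(T)=\xi_i-\varphi_i(T)=0$. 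By uniqueness for such BSDEs $y_i\equiv0$, so $x_i^*=\tilde x_i$; its martingale part then vanishes, which is exactly the linear relation in $z_{ij}^*$ and $\bar z_j$ whose solution gives $z_{ij}^*=\tilde z_{ij}$. The step I expect to be most delicate is the bookkeeping of the matching: everything is coupled through the empirical averages, so it must be carried out simultaneously at the individual and the averaged level, with attention to the non-symmetric matrix $M^\top Q$ appearing in (\ref{Sigma equation})--(\ref{varphi equation}). A secondary technical point is the invertibility of $I_n+\Sigma(t)S_j(t)$ and $I_n+(\Sigma(t)+K(t))S_j(t)$ for every $t\in[0,T]$, which underlies the formula for $z_{ij}^*$: it holds near $t=T$ because $\Sigma(T)=K(T)=0$ and should propagate under Assumption \ref{A1}, but this is where a mild additional condition might be needed.
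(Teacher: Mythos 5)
Your proposal follows essentially the same route as the paper: posit the ansatz $x_i^*=\Sigma p_i+Kp^{(N)}+\varphi_i$, average to eliminate $x^{*(N)}$, apply It\^o's formula, and read off (\ref{Sigma equation})--(\ref{varphi equation}) and the formula for $z_{ij}^*$ by matching drift and diffusion coefficients; your algebra for $\bar z_j$ and $K_1$ reproduces the paper's computation exactly. The closing verification argument via uniqueness for the homogeneous linear mean-field BSDE, and the remark on invertibility of $I_n+\Sigma S_j$ and $I_n+(\Sigma+K)S_j$, are sound additions that the paper leaves implicit.
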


\begin{proof}
	Noting the terminal condition and structure of (\ref{optimality Hamiltonian system}), for each $i = 1,\ldots, N$, we suppose
	\begin{equation}\label{decouple form}
		x^*_i(\cdot)=\Sigma(\cdot) p_i(\cdot)+K(\cdot) p^{(N)}(\cdot)+\varphi_i(\cdot),
	\end{equation}
	with $\Sigma(T) = 0, K(T) = 0$ for two deterministic differentiable functions $\Sigma(\cdot), K(\cdot)$, and for an $\mathcal{F}_t^i$-adapted process $\varphi_i(\cdot)$ satisfying a BSDE:
	\begin{equation}
		\left\{\begin{aligned}
			d \varphi_i(t)&=\alpha_i(t) d t+ \sum_{j=1}^N \beta_{i j}(t) d W_j(t),\\
			\varphi_i(T)&=\xi_i,
		\end{aligned}\right.
	\end{equation}
	with process $\alpha_i(\cdot)$ to be determined. And then, we have
	\begin{equation}\label{decouple form x^N}
        x^{*(N)}(\cdot)=\Sigma(\cdot) p^{(N)}(\cdot)+K(\cdot) p^{(N)}(\cdot)+\varphi^{(N)}(\cdot).
	\end{equation}
	Applying It\^o’s formula to (\ref{decouple form}), we have
	$$
	\begin{aligned}
		dx^*_i&= \dot{\Sigma} p_i d t-\Sigma\left\{\left[A^\top p_i+\left(I_n-\frac{\Gamma_1}{N}\right)^\top Q\left(x^*_i-\Gamma_1 x^{*(N)}-\eta_1\right)\right] d t+\sum_{j=1}^N S_j z_{i j} d W_j\right\}  \\
		&\quad + \dot{K} p^{(N)} d t-K\left\{\left[A^\top p^{(N)}+\left(I_n-\frac{\Gamma_1}{N}\right)^\top Q\left(x^{*(N)}-\Gamma_1 x^{*(N)}-\eta_1\right)\right] d t\right. \\
		&\quad \left.+\sum_{j=1}^N S_j \frac{1}{N} \sum_{i=1}^N z^*_{i j} d W_j\right\} +\alpha_i d t+\sum_{j=1}^N \beta_{i j} d W_j  \\
		& =\left(A x_i^*-B R^{-1} B^{\top}p_i\right) d t+\sum_{j=1}^N z^*_{i j} d W_j.
	\end{aligned}
	$$
	By comparing the coefficients of the diffusion terms, we obtain
	$$
	-\Sigma \sum_{j=1}^N S_j z^*_{i j} -\frac{K}{N}  \sum_{j=1}^N S_j \sum_{i=1}^N z^*_{i j} +\sum_{j=1}^N \beta_{i j}  -\sum_{j=1}^N z_{ij} =0.
	$$
	Then we can solve for $z^*_{ij}(\cdot)$ explicitly:
	$$
	z^*_{ij}=\left(I_n+\Sigma S_j\right)^{-1} \beta_{i j}-\left(I_n+\Sigma S_j\right)^{-1} \frac{K}{N}S_j \left(I_n+\Sigma S_j+K S_j\right)^{-1} \sum_{i=1}^{N} \beta_{i j}.
	$$
	Then by comparing the coefficients of the drift terms, we obtain 
	$$
	\begin{aligned}
	    &\dot{\Sigma} p_i-\Sigma\left[A^\top p_i+\left(I_n-\frac{\Gamma_1}{N}\right)^\top Q\left(x^*_i-\Gamma_1 x^{*(N)}-\eta_1\right)\right]+ \dot{K} p^{(N)} \\
	    &-K\left[A^\top p^{(N)}+\left(I_n-\frac{\Gamma_1}{N}\right)^\top Q\left(x^{*(N)}-\Gamma_1 x^{*(N)}-\eta_1\right)\right]+\alpha_i\\
	    & =A x_i^*-B R^{-1} B^\top p_i.
    \end{aligned}
	$$
	Combining (\ref{decouple form}) and (\ref{decouple form x^N}), we can obtain the equation (\ref{Sigma equation}) of $\Sigma(\cdot)$  from the coefficients of $p_i(\cdot)$, the equation (\ref{K equation}) of $K(\cdot)$  from the coefficients of $p^{(N)}(\cdot)$ and the equation (\ref{varphi equation}) of $\varphi_i(\cdot)$ from the non-homogeneous term. Then, we completed the proof.
\end{proof}

\begin{Proposition}\label{decouple proposition 2}
	Let Assumption (\ref{A1}), (\ref{A2}) hold. Let $(x^*_i(\cdot),z^*_{ij}(\cdot),p_i(\cdot),i,j=1,\ldots,N)$ be the solution of FBSDE (\ref{optimality Hamiltonian system}). Then, we have the following relations:
	\begin{equation}\label{decouple form 2}
		p_i(\cdot)=\Pi(\cdot) x^*_i(\cdot)+M(\cdot) x^{*(N)}(\cdot)+\zeta_i(\cdot),
	\end{equation}
	where $\Pi(\cdot),M(\cdot),\zeta_i(\cdot)$ are solutions of the following equations, respectively:
	\begin{equation}\label{Pi equation}
		\left\{\begin{aligned}
			&\dot{\Pi}+\Pi A+ A^\top \Pi-\Pi B R^{-1} B^\top \Pi+\left(I_n-\frac{\Gamma_1}{N}\right)^\top Q=0, \\
			&\Pi(0)=-\left(I_n-\frac{\Gamma_0}{N}\right)^{\top} G,
		\end{aligned}\right.
	\end{equation}
	\begin{equation}\label{M equation}
		\left\{\begin{aligned}
			&\dot{M}+M A+A^\top M-\Pi B R^{-1} B^\top M-M B R^{-1} B^\top(\Pi+M)-\left(I_n-\frac{\Gamma_1}{N}\right)^\top Q \Gamma_1=0, \\
			&M(0)=\left(I_n-\frac{\Gamma_0}{N}\right)^{\top} G \Gamma_0,
		\end{aligned}\right.
	\end{equation}
	\begin{equation}\label{zeta equation}
		\left\{\begin{aligned}
			d \zeta_i=&\left[\left(\Pi B R^{-1} B^\top-A^\top\right) \zeta_i+M B R^{-1} B^\top \zeta^{*(N)}+\left(I_n-\frac{\Gamma_1}{N}\right)^\top Q \eta_1 \right]dt \\
			& -\sum_{j=1}^N \left(S_j+\Pi\right) \left(I_n+\Sigma S_j\right)^{-1} \beta_{i j}dW_j -\sum_{j=1}^N\frac{M}{N} \left(I_n+\Sigma S_j\right)^{-1}\sum_{i=1}^N \beta_{i j} d W_j  \\
			& +\sum_{j=1}^N \left(S_j + \Sigma-M \right)\frac{K_1}{N} \sum_{i=1}^N \beta_{i j} d W_j, \\
			\zeta_i(0)=&\left(I_n-\frac{\Gamma_0}{N}\right)^{\top} G \eta_0.
		\end{aligned}\right.
	\end{equation}
\end{Proposition}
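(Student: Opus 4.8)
The plan is to establish the representation (\ref{decouple form 2}) by the same It\^o-and-match technique used in Proposition~3.1, but now orienting the Riccati-type equations so that they run \emph{forward} from $t=0$, since here it is the adjoint $p_i(\cdot)$ that carries an initial datum while $x_i^*(\cdot)$ carries the terminal datum. First I would posit $p_i(t)=\Pi(t)x_i^*(t)+M(t)x^{*(N)}(t)+\zeta_i(t)$ with $\Pi(\cdot),M(\cdot)$ deterministic and differentiable and $\zeta_i(\cdot)$ an It\^o process whose drift $\mu_i(\cdot)$ and diffusion $\{\nu_{ij}(\cdot)\}_{j}$ are to be found; averaging over $i$ gives $p^{(N)}=(\Pi+M)x^{*(N)}+\zeta^{(N)}$. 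Evaluating at $t=0$ and matching against the initial condition $p_i(0)=-(I_n-\tfrac{\Gamma_0}{N})^\top G(x_i^*(0)-\Gamma_0 x^{*(N)}(0)-\eta_0)$ forces $\Pi(0)=-(I_n-\tfrac{\Gamma_0}{N})^\top G$, $M(0)=(I_n-\tfrac{\Gamma_0}{N})^\top G\Gamma_0$ and $\zeta_i(0)=(I_n-\tfrac{\Gamma_0}{N})^\top G\eta_0$, which are precisely the initial data appearing in (\ref{Pi equation})--(\ref{zeta equation}).

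Next I would apply It\^o's formula to the ansatz. On the state side I use $dx_i^*=(Ax_i^*-BR^{-1}B^\top p_i)\,dt+\sum_{j}z_{ij}^*\,dW_j$, which follows from (\ref{optimality Hamiltonian system}) together with the stationarity condition (\ref{optimality conditions}), and the averaged version for $x^{*(N)}$; on the adjoint side I use the $dp_i$ dynamics from (\ref{optimality Hamiltonian system}). Substituting $p_i=\Pi x_i^*+Mx^{*(N)}+\zeta_i$ and $p^{(N)}=(\Pi+M)x^{*(N)}+\zeta^{(N)}$ into both sides and equating the $dW_j$-coefficients gives $\nu_{ij}=-(S_j+\Pi)z_{ij}^*-\tfrac{M}{N}\sum_{i'}z_{i'j}^*$; inserting the explicit formula for $z_{ij}^*$ from Proposition~3.1 and regrouping the empirical sums $\sum_{i'}\beta_{i'j}$ (using the definition of $K_1$) reproduces the stochastic integrand of (\ref{zeta equation}). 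Equating the $dt$-coefficients and then collecting separately the terms proportional to $x_i^*$, the terms proportional to $x^{*(N)}$, and the remaining inhomogeneous terms, I obtain respectively the Riccati equation (\ref{Pi equation}) for $\Pi$, the linear matrix equation (\ref{M equation}) for $M$, and the drift $\mu_i=(\Pi BR^{-1}B^\top-A^\top)\zeta_i+MBR^{-1}B^\top\zeta^{(N)}+(I_n-\tfrac{\Gamma_1}{N})^\top Q\eta_1$ of (\ref{zeta equation}). Reading this computation in reverse makes the argument complete: once $\Pi,M,\zeta_i$ are defined as solutions of (\ref{Pi equation})--(\ref{zeta equation}), the process $\widehat p_i:=\Pi x_i^*+Mx^{*(N)}+\zeta_i$ satisfies the adjoint equation of (\ref{optimality Hamiltonian system}) with the correct initial value, so by uniqueness of the solution of the Hamiltonian forward-backward system $\widehat p_i=p_i$, which is the claimed identity.

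The part I expect to require genuine care, rather than mere bookkeeping, is the well-posedness of the auxiliary equations, above all the Riccati equation (\ref{Pi equation}). After the substitution $\Pi=-P$ and the time change $s=T-t$ it becomes a Riccati equation with the standard quadratic term $-PBR^{-1}B^\top P$ and nonnegative "initial" datum $(I_n-\tfrac{\Gamma_0}{N})^\top G$; under Assumption~\ref{A1} (in particular $R(\cdot)>0$, $Q(\cdot)\ge0$, $G\ge0$) this is uniquely solvable on $[0,T]$, at least for $N$ large enough that the nonsymmetric perturbation $(I_n-\tfrac{\Gamma_1}{N})^\top Q$ is controlled. Granting $\Pi$, equation (\ref{M equation}) is then a linear ODE for $M$ with $L^\infty$ coefficients, and (\ref{zeta equation}) is a linear BSDE for $\zeta_i$ with $L^\infty$ coefficients and square-integrable data, hence each is uniquely solvable. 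The remaining delicacy is purely computational: keeping track of the $K_1$-terms generated by the mean-field coupling when simplifying the diffusion coefficient of $\zeta_i$, which is where sign or grouping slips are easiest to make.
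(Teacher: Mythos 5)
Your proposal is correct and follows essentially the same route as the paper: posit the affine ansatz $p_i=\Pi x_i^*+Mx^{*(N)}+\zeta_i$, fix the initial data by matching $p_i(0)$, apply It\^o's formula using the stationarity condition, and identify the $dW_j$- and $dt$-coefficients to obtain (\ref{Pi equation}), (\ref{M equation}) and (\ref{zeta equation}), exactly as in the paper's proof. One trivial remark: since $\zeta_i$ carries an initial (not terminal) condition and its diffusion coefficients are already determined by the $\beta_{ij}$ from the first decoupling step, (\ref{zeta equation}) is a forward linear SDE rather than a BSDE, as the paper itself labels it.
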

\begin{proof}
	Noting the initial condition and structure of (\ref{optimality Hamiltonian system}), for each $i = 1,\ldots, N$, we suppose
	\begin{equation}\label{decouple form 2}
		p_i(\cdot)=\Pi(\cdot) x^*_i(\cdot)+M(\cdot) x^{*(N)}(\cdot)+\zeta_i(\cdot),
	\end{equation}
	with $\Pi(0) = (I_n-\frac{\Gamma_0}{N})^\top G, M(0) = (I_n-\frac{\Gamma_0}{N})^\top G \Gamma_0$ for two deterministic differentiable functions $\Pi(\cdot), M(\cdot)$, and for an $\mathcal{F}_t^i$-adapted process $\zeta_i(\cdot)$ satisfying an SDE:
	$$
	\left\{\begin{aligned}
		d \zeta_i(t)&=\chi_i(t) d t+ \sum_{j=1}^N \gamma_{i j}(t) d W_{j}, \\
		\zeta_i(0)&=(I_n-\frac{\Gamma_0}{N})^\top G \eta_0,
	\end{aligned}\right.
	$$
	with processes $\chi_i(\cdot), \gamma_{ij}(\cdot)$ to be determined. Applying It\^o’s formula to (\ref{decouple form 2}), we have
	$$
	\begin{aligned}
		dp_i&=\left\{\dot{\Pi} x^*_i+\Pi A x^*_i-\Pi B R^{-1} B^{\top}\left(\Pi x^*_i+M x^{*(N)}+\zeta^*_i\right)\right\}dt+ \Pi \sum_{j=1}^{N} z^*_{ij} dW_j\\
        &\quad +\left\{\dot{M} x^{*(N)}+M A x^{*(N)}-M B R^{-1} B^\top \left[\left(\Pi+M\right) x^{*(N)} +\zeta^{*(N)}\right] +\chi_i \right\}dt\\
		&\quad +\frac{M}{N}\sum_{j=1}^{N}\sum_{i=1}^{N} z^*_{ij} dW_j+\sum_{j=1}^{N} \gamma_{ij} dW_j \\
		&=-\left[A^{\top}\left(\Pi x^*_i+M x^{*(N)}+\zeta^*_i\right)+\left(I_n-\frac{\Gamma_1}{N}\right)^\top Q\left(x^*_i-\Gamma_1 x^{*(N)}-\eta_1\right)\right]dt -\sum_{j=1}^{N} S_j z^*_{ij} dW_j.
	\end{aligned}
	$$
	By comparing the coefficients of the diffusion terms, we obtain
	$$
	-\sum_{j=1}^N S_j z^*_{i j}=\sum_{j=1}^N \gamma_{i j} + \Pi \sum_{j=1}^N z^*_{i j} +\frac{M}{N} \sum_{j=1}^N \sum_{j=1}^N z^*_{i j}.
	$$
	Then we can solve for $\gamma_{ij}(\cdot)$ explicitly:
	$$
	\gamma_{i j}=-S_j z^*_{i j}-\Pi z^*_{i j}-\frac{M}{N} \sum_{i=1}^N z^*_{i j}.
	$$
	Then by comparing the coefficients of the drift terms, we obtain 
	$$
	\begin{aligned}
		&\dot{\Pi} x^*_i+\Pi A x^*_i-\Pi B R^{-1} B^\top\left(\Pi x^*_i+M x^{*(N)}+\zeta_i\right)\\
		&+\dot{M} x^{*(N)}+M A x^{*(N)}-M B R^{-1} B^\top \left[\left(\Pi+M\right) x^{*(N)} +\zeta^{(N)}\right] +\chi_i \\
		&=-\left[A^{\top}\left(\Pi x^*_i+M x^{*(N)}+\zeta_i\right)+\left(I_n-\frac{\Gamma_1}{N}\right)^\top Q\left(x^*_i-\Gamma_1 x^{*(N)}-\eta_1\right)\right].
	\end{aligned}
	$$
    Then, we can obtain the equation (\ref{Pi equation}) of $\Pi(\cdot)$  from the coefficients of $x^*_i(\cdot)$, the equation (\ref{M equation}) of $M(\cdot)$  from the coefficients of $x^{*(N)}(\cdot)$ and
	the equation (\ref{zeta equation}) of $\zeta_i(\cdot)$  from the non-homogeneous term. Then, we completed the proof.
\end{proof}

\begin{Theorem}
	Let Assumption (\ref{A1}), (\ref{A2}) hold. Then Riccati equations (\ref{Sigma equation}), (\ref{K equation}), (\ref{Pi equation}), (\ref{M equation}), BSDE (\ref{varphi equation}), SDE (\ref{zeta equation}) admit unique solutions, respectively. In addition, the centralized optimal strategy of agent $\mathcal{A}_i$ has a feedback form as follows:
	\begin{equation}\label{centralized strategy}
		u^*_i(t)=-R^{-1}(t) B^\top (t)\left(\Pi(t) x^*_i(t)+M(t) x^{*(N)}(t)+\zeta_i(t)\right),\quad t\in[0,T].
	\end{equation}
\end{Theorem}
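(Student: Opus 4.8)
The plan is to establish well-posedness of the four matrix (Riccati) equations first, then of the auxiliary linear BSDE~(\ref{varphi equation}) and SDE~(\ref{zeta equation}), and finally to build a solution of the Hamiltonian system~(\ref{optimality Hamiltonian system}) and read off~(\ref{centralized strategy}) by feeding the decoupling relation $p_i=\Pi x^*_i+Mx^{*(N)}+\zeta_i$ into the stationarity condition~(\ref{optimality conditions}). For the Riccati part I would treat (\ref{Sigma equation}) and (\ref{Pi equation}) first, and then (\ref{K equation}) --- which, for $\Sigma$ given, is again a Riccati equation for $K$ --- and (\ref{M equation}), a Riccati equation for $M$ once $\Pi$ is known. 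Each right-hand side is polynomial in the matrix entries with $L^\infty(0,T)$ time coefficients and $R^{-1}(\cdot)$ bounded by Assumption~(\ref{A1})(ii), so the Cauchy--Lipschitz theorem gives a unique maximal local solution and local Lipschitz continuity yields uniqueness on any interval of existence. Global existence on $[0,T]$ reduces to an a priori $L^\infty$-bound on each solution, which I would derive from the structural conditions in~(\ref{A1}), namely $Q\ge0$, $G\ge0$, $R>0$: these keep the leading symmetric parts of the flow sign-definite and let one bound them by a Gronwall estimate together with comparison against the value of an associated convex backward LQ problem --- strict convexity being exactly the content of Lemma~3.1 --- while the genuinely non-symmetric $O(1/N)$-corrections generated by $\Gamma_1/N$ and $\Gamma_0/N$ are absorbed as a perturbation solving a linear equation with bounded coefficients. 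This a priori bound is where essentially all the effort goes and is the main obstacle: because of the factors $\Gamma_1/N,\Gamma_0/N$ the quadratic terms need not be sign-definite, so these are not classical symmetric Riccati equations and the usual monotonicity/convexity arguments have to be adapted to control the sign and size of the solutions.

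With $\Sigma(\cdot),K(\cdot)$ solved --- hence continuous and bounded on $[0,T]$ --- equation~(\ref{varphi equation}) is a linear BSDE for the stacked process $(\varphi_1,\dots,\varphi_N)$, the coupling through $\varphi^{(N)}$ being linear, with bounded coefficients and terminal datum in $L^2_{\mathcal{F}_T}$ by Assumption~(\ref{A2}); the Pardoux--Peng theorem~\cite{Pardoux-Peng-1990} then gives a unique solution $(\varphi_i,\beta_{i1},\dots,\beta_{iN})$ in the natural $L^2$-spaces. Here one also checks that $K_1(\cdot)$ is well defined and bounded: $I_n+\Sigma S_j$ and $I_n+\Sigma S_j+KS_j$ are invertible with bounded inverse along the now proven-bounded $\Sigma,K$ --- near $t=T$ both are close to $I_n$, and in general invertibility follows from the Riccati bounds just established together with $S_j\ge0$. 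Once $\Pi,M,\Sigma,K_1$ are bounded and the $\beta_{ij}$ are fixed in $L^2_{\mathcal{F}}$, equation~(\ref{zeta equation}) is a linear forward SDE for $(\zeta_1,\dots,\zeta_N)$ with bounded drift coefficients and $L^2$ diffusion source terms, so classical SDE theory yields a unique solution.

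Finally, with all the matrices and the processes $\varphi_i,\beta_{ij},\zeta_i$ available, I would construct a solution of~(\ref{optimality Hamiltonian system}): substituting $p_i=\Pi x^*_i+Mx^{*(N)}+\zeta_i$ into the state equation, together with the explicit formula for $z^*_{ij}$ in terms of $\beta_{ij}$, turns~(\ref{optimality Hamiltonian system}) into a decoupled linear BSDE for $(x^*_1,\dots,x^*_N)$ with terminal data $\xi_i$, which is uniquely solvable; setting $p_i:=\Pi x^*_i+Mx^{*(N)}+\zeta_i$ and applying It\^o's formula --- the computations in the proofs of Proposition~3.1 and Proposition~\ref{decouple proposition 2} read in reverse, using (\ref{Sigma equation})--(\ref{zeta equation}) --- confirms that the resulting triple indeed solves~(\ref{optimality Hamiltonian system}), the prescribed initial values $\Pi(0),M(0),\zeta_i(0)$ producing precisely the boundary condition $p_i(0)=-(I_n-\Gamma_0/N)^\top G(x^*_i(0)-\Gamma_0 x^{*(N)}(0)-\eta_0)$. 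Since $\Pi,M$ are bounded and $x^*_i,\zeta_i\in L^2_{\mathcal{F}}$ one has $u^*_i:=-R^{-1}B^\top p_i\in\mathcal{U}^c_i$, so by Theorem~3.1 the strategy $u^*=(u^*_1,\dots,u^*_N)$ is a Nash equilibrium of Problem~\ref{problem centralized}; substituting $p_i=\Pi x^*_i+Mx^{*(N)}+\zeta_i$ into~(\ref{optimality conditions}) then yields the feedback representation~(\ref{centralized strategy}).
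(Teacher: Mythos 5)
Your proposal follows essentially the same route as the paper's own (very terse) proof: establish well-posedness of the four Riccati equations (the paper simply cites the monograph \cite{Reid-1972}), invoke classical linear BSDE/SDE theory for (\ref{varphi equation}) and (\ref{zeta equation}), and then combine Proposition \ref{decouple proposition 2} with the stationarity condition (\ref{optimality conditions}) to obtain (\ref{centralized strategy}). Your extended discussion of the a priori bounds needed for global existence of the nonsymmetric Riccati equations (\ref{Sigma equation}), (\ref{K equation}), (\ref{M equation}) --- whose quadratic terms involve $(I_n-\Gamma_1/N)^{\top}Q$ and are therefore not sign-definite in the classical sense --- identifies a real issue that the paper's one-line citation glosses over, but it does not change the structure of the argument.
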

\begin{proof}
	By referring to monograph \cite{Reid-1972}, Riccati equations (\ref{Sigma equation}), (\ref{K equation}), (\ref{Pi equation}), (\ref{M equation}) have unique solutions, respectively. The existence and uniqueness of the solution to $\varphi_i(\cdot)$ and $\zeta_i(\cdot)$ can be derived by
	classical linear SDE and BSDE theory.
	Applying Proposition (\ref{decouple proposition 2}), we can obtain (\ref{centralized strategy}).
\end{proof}

Here, we have got the centralized optimal strategy of agent $\mathcal{A}_i$. Next, to overcome the difficulty posed by the curse of dimensionality induced by state-average term $x^{*(N)}$ in numerical calculation, we let $N$ tend to infinity to obtain a decentralized optimal strategy of agent $\mathcal{A}_i$. 

First of all, we need to obtain the limiting version of Riccati equations (\ref{Sigma equation}), (\ref{K equation}), (\ref{Pi equation}), (\ref{M equation}):
\begin{equation}\label{Sigma limit equation}
	\left\{\begin{aligned}
		&\dot{\bar{\Sigma}}-A \bar{\Sigma}-\bar{\Sigma} A^\top-\bar{\Sigma} Q \bar{\Sigma}+B R^{-1} B^\top=0, \\
		&\bar{\Sigma}(T)=0,
	\end{aligned}\right.
\end{equation}
\begin{equation}\label{K limit equation}
	\left\{\begin{aligned}
		& \dot{\bar{K}}-A \bar{K}-\bar{K} A^\top-\bar{\Sigma} Q \bar{K}-\bar{K} Q(\bar{\Sigma}+\bar{K}) +(\bar{K}+\bar{\Sigma}) Q \Gamma_1(\bar{\Sigma}+\bar{K})=0, \\
		& \bar{K}(T)=0,
	\end{aligned}\right.
\end{equation}
	\begin{equation}\label{Pi limit equation}
	\left\{\begin{aligned}
		&\dot{\bar{\Pi}}+\bar{\Pi} A+ A^\top \bar{\Pi}-\bar{\Pi} B R^{-1} B^\top \bar{\Pi}+ Q=0, \\
		&\bar{\Pi}(0)=- G,
	\end{aligned}\right.
\end{equation}
\begin{equation}\label{M limit equation}
	\left\{\begin{aligned}
		&\dot{\bar{M}}+\bar{M} A+A^\top \bar{M}-\bar{\Pi} B R^{-1} B^\top \bar{M}-\bar{M} B R^{-1} B^\top(\bar{\Pi}+\bar{M})- Q \Gamma_1 =0\\
		&\bar{M}(0)= G \Gamma_0.
	\end{aligned}\right.
\end{equation}
\begin{Remark}
	By referring to \cite{Reid-1972}, Riccati equations (\ref{Sigma limit equation}), (\ref{K limit equation}), (\ref{Pi limit equation}), (\ref{M limit equation}) have unique solutions, respectively. And we can use the
	continuous dependence of solutions on the parameter referred to the Theorem 3.5 of \cite{Khalil-2002} to verifies the limiting functions of $\Sigma(\cdot)$, $K(\cdot)$, $\Pi(\cdot)$, $M(\cdot)$ are $\bar{\Sigma}(\cdot)$, $\bar{K}(\cdot)$, $\bar{\Pi}(\cdot)$, $\bar{M}(\cdot)$, respectively. And applying Theorem 4 of \cite{Huang-Zhou-2020}, we have $\sup\limits_{0\leqslant t \leqslant T}|\Sigma(t)-\bar{\Sigma}(t)|=O\left(\frac{1}{N}\right)$, $\sup\limits_{0\leqslant t \leqslant T}|K(t)-\bar{K}(t)|=O\left(\frac{1}{N}\right)$, $\sup\limits_{0\leqslant t \leqslant T}|\Pi(t)-\bar{\Pi}(t)|=O\left(\frac{1}{N}\right)$, $\sup\limits_{0\leqslant t \leqslant T}|M(t)-\bar{M}(t)|=O\left(\frac{1}{N}\right)$.
\end{Remark}
	
    Next, by summing up $N$ equations of (\ref{varphi equation}) and (\ref{zeta equation}) and dividing them by $N$, we get
	\begin{equation}\label{varphi N equation}
	\left\{\begin{aligned}
		d \varphi^{(N)}&= \left\{\left[A +\left(\Sigma+K\right)\left(I_n-\frac{\Gamma_1}{N}\right)^\top Q \left(I_n-\Gamma_1\right)\right] \varphi^{(N)}\right.\\
		&\quad \left.-(\Sigma+K)\left(I_n-\frac{\Gamma_1}{N}\right)^\top Q \eta_1\right\}dt+\frac{1}{N}\sum_{j=1}^N \sum_{i=1}^N \beta^*_{i j} d W_j,\\
		\varphi^{(N)}(T)&=\frac{1}{N}\sum_{i=1}^N \xi_i,
	\end{aligned}\right.
\end{equation}
\begin{equation}\label{zeta N equation}
	\left\{\begin{aligned}
		d \zeta^{(N)}=&\left\{\left[\left(\Pi+M\right) B R^{-1} B^\top-A^\top\right] \zeta^{(N)}+\left(I_n-\frac{\Gamma_1}{N}\right)^\top Q \eta_1 \right\}dt \\
		& -\sum_{j=1}^N \left(S_j+\Pi+M\right) \left(I_n+\Sigma S_j+K S_j\right)^{-1}\frac{1}{N}\sum_{i=1}^N \beta_{i j}dW_j,\\
		\zeta^{(N)}(0)=&\left(I_n-\frac{\Gamma_0}{N}\right)^\top G \eta_0.
	\end{aligned}\right.
\end{equation}
When $N$ tends to infinity, by (\ref{A1}) and the strong Law of Large Numbers, the limit of $\frac{1}{N}\sum_{i=1}^N\xi_i$ exists and 
$$
\lim _{N \rightarrow \infty} \frac{1}{N} \sum_{i=1}^N \xi_i=\mathbb{E}\xi .
$$
Then, we can get the equations of the limiting processes $\bar{\varphi}(\cdot)$ and $\bar{\zeta}(\cdot)$ of $\varphi^{(N)}(\cdot)$ and $\zeta^{(N)}(\cdot)$:
\begin{equation}\label{varphi N limit equation}
	\left\{\begin{aligned}
		d \bar{\varphi}&= \Big\{\left[A +\left(\bar{\Sigma}+\bar{K}\right) Q \left(I_n-\Gamma_1\right)\right] \bar{\varphi}-(\bar{\Sigma}+\bar{K}) Q \eta_1\Big\}dt,\\
		\bar{\varphi}(T)&=\mathbb{E}\xi,
	\end{aligned}\right.
\end{equation}
\begin{equation}\label{zeta N limit equation}
	\left\{\begin{aligned}
		d \bar{\zeta}&= \left\{\left[\left(\bar{\Pi}+\bar{M}\right) B R^{-1} B^{\top}-A^\top\right] \bar{\zeta}+ Q \eta_1 \right\}dt, \\
		\bar{\zeta}(0)&= G \eta_0.
	\end{aligned}\right.
\end{equation}
\begin{Remark}
	Using the classical theory of SDEs and BSDEs, we have the following estimations: $\mathbb{E}\int_0^T|\varphi^{(N)}-\bar{\varphi}|^2dt=O\left(\frac{1}{N}\right)$, $\frac{1}{N}\sum_{j=1}^N \mathbb{E}\int_0^T|\sum_{i=1}^N \beta_{i j}|^2 dt=O\left(\frac{1}{N}\right)$, $\mathbb{E}\int_0^T|\varphi^{(N)}-\bar{\varphi}|^2 dt=O\left(\frac{1}{N}\right)$. 
\end{Remark}
Using a similar method as above, we can obtain the equation of $x^{*(N)}(\cdot)$:
\begin{equation}\label{x N system}
	\left\{\begin{aligned}
		d x^{*(N)}=&\left\{\left[A-BR^{-1}B^\top\left(\Pi+M\right)\right] x^{*(N)}-BR^{-1}B^\top \zeta^{(N)}\right\} d t+\frac{1}{N}\sum_{j=1}^N \sum_{i=1}^N z_{ij}^* d W_j, \\
		x^{*(N)}(T)&=\sum_{i=1}^N \xi_i.
	\end{aligned}\right.
\end{equation}
Then, letting $N$ tends to infinity, we can obtain the equations of the limiting processes $x_0(\cdot)$ of $x^{*(N)}(\cdot)$: 
\begin{equation}\label{x N limit system}
	\left\{\begin{aligned}
		dx_0=&\left\{\left[A-BR^{-1}B^{\top}\left(\bar{\Pi}+\bar{M}\right)\right] x_0-BR^{-1}B^\top\bar{\zeta}\right\} d t,\\
		x_0(T)&=\mathbb{E}\xi.
	\end{aligned}\right.
\end{equation}

Next, we replace the state-average term by the limiting processes $\bar{\varphi}(\cdot)$ and $\bar{\zeta}(\cdot)$ in BSDE (\ref{varphi equation}) and SDE (\ref{zeta equation}), respectively. Therefore, we derive the following equations, which is decoupled, for $i=1,\cdots,N$:
\begin{equation}\label{varphi limit equation}
	\left\{\begin{aligned}
		 d \bar{\varphi}_i&=\Big\{\left(A+\bar{\Sigma} Q\right) \bar{\varphi}_i-\left[(\bar{\Sigma}+\bar{K}) Q \Gamma_1+\bar{K}Q\right] \bar{\varphi}-(\bar{\Sigma}+\bar{K}) Q \eta_1\Big\}dt+\sum_{j=1}^N \bar{\beta}_{i j} d W_j,\\
		\bar{\varphi}_i(T)&=\xi_i,
	\end{aligned}\right.
\end{equation}
	\begin{equation}\label{zeta limit equation}
	\left\{\begin{aligned}
		d \bar{\zeta}_i&=\left[\left(\bar{\Pi} B R^{-1} B^\top-A^{\top}\right) \bar{\zeta}_i+\bar{M} B R^{-1} B^\top \bar{\zeta}+ Q \eta_1 \right]dt \\
		&\quad -\sum_{j=1}^N \left(S_j+\bar{\Pi}\right) \left(I_n+\bar{\Sigma} S_j\right)^{-1} \bar{\beta}_{i j}dW_j, \\
		\zeta^*_i(0)&= G \eta_0.
	\end{aligned}\right.
\end{equation}
\begin{Remark}
	Similarly, using the classical theory of SDEs and BSDEs, we have the following estimations: $\mathbb{E}\int_0^T|x^{*(N)}-x_0|^2dt=O\left(\frac{1}{N}\right)$, $\frac{1}{N}\sum_{j=1}^N \mathbb{E}\int_0^T|\sum_{i=1}^N z^{*}_{i j}|^2 dt=O\left(\frac{1}{N}\right)$, 
	$\mathbb{E}\int_0^T|\varphi_i-\bar{\varphi}_i|^2 dt=O\left(\frac{1}{N}\right)$,  $\mathbb{E}\int_0^T\sum_{i=1}^N |\beta_{i j}-\bar{\beta}_{i j}|^2 dt=O\left(\frac{1}{N}\right)$,
    $\mathbb{E}\int_0^T|\zeta_i-\bar{\zeta}_i|^2 dt=O\left(\frac{1}{N}\right)$. 
\end{Remark}
Similarly, replacing $x^{*(N)}(\cdot)$ by its limiting process $x_0(\cdot)$ and replacing $\zeta_i(\cdot)$ by $\bar{\zeta}_i(\cdot)$, we have the decoupled state equation of $\bar{x}_i(\cdot)$:
\begin{equation}\label{x_i limit system}
	\left\{\begin{aligned}
		d\bar{x}_i&=\left\{\left(A-BR^{-1}B^\top \bar{\Pi}\right) \bar{x}_i-BR^{-1}B^\top \bar{M} x_0-BR^{-1}B^\top \bar{\zeta}_i\right\} d t+\sum_{j=1}^N \bar{z}_{ij} d W_j, \\
		\bar{x}_i(T)&=\xi_i.
	\end{aligned}\right.
\end{equation}
Using $\bar{x}_i(\cdot)$, $x_0(\cdot)$ and $\bar{\zeta}_i(\cdot)$ to replace $x^*_i(\cdot)$, $x^{*(N)}(\cdot)$ and $\zeta_i(\cdot)$ in the centralized strategy (\ref{centralized strategy}), respectively, we obtain the decentralized strategy for agent $\mathcal{A}_i$:
\begin{equation}\label{decentralized strategy}
	\bar{u}_i(t)=-R^{-1}(t) B(t)^\top\left(\bar{\Pi}(t) \bar{x}_i(t)+\bar{M}(t) x_0(t)+\bar{\zeta}_i(t)\right),\quad t\in[0,T].
\end{equation}
\begin{Remark}
	Similarly, we can also have the following estimations: 
	$\mathbb{E}\int_0^T|x^*_i-\bar{x}_i|^2 dt=O\left(\frac{1}{N}\right)$,  $\mathbb{E}\int_0^T\sum_{i=1}^N |z^{*}_{i j}-\bar{z}_{i j}|^2 dt=O\left(\frac{1}{N}\right)$. 
\end{Remark}
We will verify its $\epsilon$-asymptotic property in the next section.

\section{The asymptotic analysis}

In this section, we aim to prove that the decentralized strategies (\ref{decentralized strategy}) of agent $\mathcal{A}_i$, $i=1,\ldots,N$
constitute an approximated $\epsilon$-Nash equilibrium.
\begin{Theorem}
	Let Assumption (\ref{A1}), (\ref{A2}) hold. Then $(\bar{u}_1(\cdot),\ldots,\bar{u}_N(\cdot))$ given by (\ref{decentralized strategy}) is an $\epsilon$-Nash equilibrium of Problem (\ref{problem decentralized}), where $\epsilon=O(\frac{1}{\sqrt{N}})$, i.e.,
	$$
	\left| \mathcal{J}_i(\bar{u_i}(\cdot);\bar{u}_{-i}(\cdot))- \inf _{u_i(\cdot) \in \mathcal{U}_i^c} J_i(u_i(\cdot);\bar{u}_{-i}(\cdot))\right|=O\left(\frac{1}{\sqrt{N}}\right) .
	$$
\end{Theorem}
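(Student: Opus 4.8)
The plan is to compare, for each fixed $i$, both the equilibrium cost $\mathcal{J}_i(\bar u_i;\bar u_{-i})$ and an arbitrary perturbed cost $\mathcal{J}_i(u_i;\bar u_{-i})$ against the auxiliary (limiting) cost functional
$$
\bar{\mathcal{J}}_i(u_i)=\frac12\mathbb{E}\Big[\int_0^T\Big(\|x_i-\Gamma_1 x_0-\eta_1\|_Q^2+\|u_i\|_R^2+\sum_{j=1}^N\|z_{ij}\|_{S_j}^2\Big)dt+\|x_i(0)-\Gamma_0 x_0(0)-\eta_0\|_G^2\Big],
$$
where the state average has been replaced by the deterministic limit $x_0(\cdot)$ of \eqref{x N limit system} and $(x_i,z_{ij})$ solves \eqref{state} driven by $u_i$. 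Two structural observations make this tractable: the state equation \eqref{state} of each agent involves neither the state average nor the controls of the others, so (i) under the profile $(\bar u_1,\dots,\bar u_N)$ the realized trajectory of $\mathcal{A}_i$ is exactly $(\bar x_i,\bar z_{ij})$ of \eqref{x_i limit system}, and (ii) a unilateral deviation of $\mathcal{A}_i$ to $u_i$ leaves $(\bar x_j,\bar z_{jk})$, $j\neq i$, unchanged, so that the realized state average becomes $\tilde x^{(N)}=\bar x^{(N)}+\tfrac1N(x_i^{u_i}-\bar x_i)$ with $\bar x^{(N)}=\tfrac1N\sum_{j=1}^N\bar x_j$. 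The proof then reduces to three estimates: (A) $|\mathcal{J}_i(\bar u_i;\bar u_{-i})-\bar{\mathcal{J}}_i(\bar u_i)|=O(1/\sqrt N)$; (B) $|\mathcal{J}_i(u_i;\bar u_{-i})-\bar{\mathcal{J}}_i(u_i)|=O(1/\sqrt N)$, uniformly over deviations that do not increase the cost; (C) $\bar{\mathcal{J}}_i(\bar u_i)\leq\bar{\mathcal{J}}_i(u_i)$ for every $u_i\in\mathcal{U}_i^c$.

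For (C) I would note that minimizing $\bar{\mathcal{J}}_i$ over $\mathcal{U}_i^c$ subject to \eqref{state} is a standard strictly convex LQ problem for a backward state equation with $x_0,\bar\zeta$ frozen as exogenous inputs. Applying the stochastic maximum principle as in Section~3 and then carrying out the two-step decoupling of Section~3 with the limiting coefficients, its unique optimal control is $u_i=-R^{-1}B^\top\bar p_i$ with $\bar p_i:=\bar\Pi\bar x_i+\bar M x_0+\bar\zeta_i$; one verifies by It\^o's formula, using \eqref{Pi limit equation}, \eqref{M limit equation}, \eqref{zeta N limit equation}, \eqref{x N limit system} and \eqref{x_i limit system}, that this $\bar p_i$ solves the adjoint BSDE of the auxiliary problem. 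Hence $\bar u_i$ of \eqref{decentralized strategy} is exactly that optimizer, and a completion-of-squares argument yields $\bar{\mathcal{J}}_i(u_i)-\bar{\mathcal{J}}_i(\bar u_i)=\tfrac12\,(\text{nonnegative quadratic in }u_i-\bar u_i)\geq0$.

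For (A) and (B) I would use that in each difference $\mathcal{J}_i-\bar{\mathcal{J}}_i$ the $u_i$- and $z_{ij}$-terms cancel identically (the state equation is unaffected by the mean field), leaving only the $Q$- and $G$-quadratic terms; writing the difference of two quadratics as an inner product and applying Cauchy--Schwarz with Assumption~\ref{A1}, each difference is at most $C\,\|\bar x^{(N)}-x_0\|_{L^2(\Omega\times[0,T])}$ (in case (A)) or $C\,\|\tilde x^{(N)}-x_0\|_{L^2(\Omega\times[0,T])}$ (in case (B)), times an $L^2$-norm of a state that is bounded uniformly in $N$: for (A) by the linear BSDE estimate for \eqref{x_i limit system} together with $\max_{1\leq i\leq N}\mathbb{E}|\xi_i|^2<c_0$ from Assumption~\ref{A2}, and for (B) by first restricting, without loss of generality, to $u_i$ with $\mathcal{J}_i(u_i;\bar u_{-i})\leq\mathcal{J}_i(\bar u_i;\bar u_{-i})$, which by $R(\cdot)>0$ and Assumption~\ref{A2} forces $\mathbb{E}\int_0^T|u_i|^2dt\leq C$ with $C$ independent of $N$, hence $\|x_i^{u_i}\|_{L^2}\leq C$. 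The mean-field error is supplied by the estimates already recorded in Section~3: from $\mathbb{E}\int_0^T|x^{*(N)}-x_0|^2dt=O(1/N)$ and $\mathbb{E}\int_0^T|x_i^*-\bar x_i|^2dt=O(1/N)$ one gets $\|\bar x^{(N)}-x_0\|_{L^2}\leq\frac1N\sum_{j=1}^N\|\bar x_j-x_j^*\|_{L^2}+\|x^{*(N)}-x_0\|_{L^2}=O(1/\sqrt N)$, whence also $\|\tilde x^{(N)}-x_0\|_{L^2}=O(1/\sqrt N)$.

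Chaining the three estimates, for any $u_i\in\mathcal{U}_i^c$ with $\mathcal{J}_i(u_i;\bar u_{-i})\leq\mathcal{J}_i(\bar u_i;\bar u_{-i})$,
$$
\mathcal{J}_i(\bar u_i;\bar u_{-i})\leq\bar{\mathcal{J}}_i(\bar u_i)+O(1/\sqrt N)\leq\bar{\mathcal{J}}_i(u_i)+O(1/\sqrt N)\leq\mathcal{J}_i(u_i;\bar u_{-i})+O(1/\sqrt N),
$$
and the inequality $\mathcal{J}_i(\bar u_i;\bar u_{-i})\leq\mathcal{J}_i(u_i;\bar u_{-i})+O(1/\sqrt N)$ is trivial for the remaining $u_i$; taking the infimum over $u_i\in\mathcal{U}_i^c$ and combining with the trivial bound $\inf_{u_i\in\mathcal{U}_i^c}\mathcal{J}_i(u_i;\bar u_{-i})\leq\mathcal{J}_i(\bar u_i;\bar u_{-i})$ (valid since $\bar u_i\in\mathcal{U}_i^c$) gives the assertion with $\epsilon=O(1/\sqrt N)$. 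I expect the main difficulty to lie in estimate (B): securing the a priori $L^2$-bound on deviating controls uniformly in $N$, then propagating it through the backward state dynamics and through the law-of-large-numbers error for the state average; and, at a more conceptual level, justifying ingredient (C), i.e. that the feedback law \eqref{decentralized strategy} is genuinely the optimal control of the auxiliary limiting problem — which hinges on the It\^o verification that $\bar p_i=\bar\Pi\bar x_i+\bar M x_0+\bar\zeta_i$ solves the right adjoint equation.
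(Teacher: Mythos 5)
Your proposal is correct, but it follows a genuinely different route from the paper. The paper never introduces an auxiliary limiting cost functional: it expands directly around the candidate equilibrium, writing $\mathcal{J}_i(u_i;\bar u_{-i})-\mathcal{J}_i(\bar u_i;\bar u_{-i})=\widetilde{\mathcal{J}}_i(\tilde u_i;\bar u_{-i})+\mathcal{I}_i$ with $\tilde u_i=u_i-\bar u_i$, where $\widetilde{\mathcal{J}}_i\geq 0$ is the pure second-order quadratic and $\mathcal{I}_i$ is the first-order cross term; it then kills $\mathcal{I}_i$ up to $O(1/\sqrt N)$ by applying It\^o's formula to $\langle\tilde x_i,\bar p_i\rangle$ with $\bar p_i=\bar\Pi\bar x_i+\bar M x_0+\bar\zeta_i$ and invoking the $O(1/N)$ estimates on $\Sigma-\bar\Sigma$, $\beta_{ij}-\bar\beta_{ij}$, $\bar x^{(N)}-x_0$, etc. You instead run the classical NCE-style triangle inequality through the frozen-mean-field cost $\bar{\mathcal{J}}_i$, proving that $\bar u_i$ is its exact minimizer (step C) and that both $\mathcal{J}_i(\bar u_i;\bar u_{-i})$ and $\mathcal{J}_i(u_i;\bar u_{-i})$ deviate from $\bar{\mathcal{J}}_i$ by $O(1/\sqrt N)$ (steps A, B). The two arguments are dual faces of the same It\^o computation — your verification that $\bar\Pi,\bar M,\bar\zeta_i$ assemble into the adjoint of the auxiliary problem is precisely the cancellation the paper exploits inside $\mathcal{I}_i$ — but the bookkeeping differs. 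What your version buys: the restriction to deviations with $\mathcal{J}_i(u_i;\bar u_{-i})\leq\mathcal{J}_i(\bar u_i;\bar u_{-i})$, yielding a uniform-in-$N$ a priori $L^2$ bound on $u_i$ and hence on $x_i^{u_i}$, is made explicit; the paper's estimate $\mathcal{I}_i=O(1/\sqrt N)$ tacitly needs exactly such a bound on $(\tilde x_i,\tilde z_{ij},\tilde u_i)$, since $\mathcal{I}_i$ is linear in them, and the paper does not supply it. What the paper's version buys: the nonnegativity of $\widetilde{\mathcal{J}}_i$ comes for free from $Q,S_j,G\geq0$, $R>0$, so no separate optimality argument for an auxiliary problem is needed, and the decomposition isolates in $\mathcal{I}_i$ exactly the terms that must vanish as $N\to\infty$. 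One small caveat on your step B: when you quantify the perturbed state average, note that the deviation of agent $\mathcal{A}_i$ shifts $x^{(N)}$ only by $\tfrac1N(x_i^{u_i}-\bar x_i)$, which is where the a priori bound is genuinely needed; with that in place your chain of inequalities closes correctly.
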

\begin{proof}
	First, by summing up $N$ equations of (\ref{x_i limit system}) and dividing them by $N$, we have
	\begin{equation}\label{bar x^N limit system}
		\left\{\begin{aligned}
			d\bar{x}^{(N)}&=\left\{\left(A-BR^{-1}B^\top \bar{\Pi}\right) \bar{x}^{(N)}-BR^{-1}B^\top \bar{M} x_0-BR^{-1}B^\top \bar{\zeta}_i\right\} d t+\frac{1}{N}\sum_{j=1}^N \sum_{i=1}^N \bar{z}_{ij} d W_j, \\
			\bar{x}^{(N)}(T)&=\frac{1}{N}\sum_{i=1}^N \xi_i.
		\end{aligned}\right.
	\end{equation}
	Using the classical theory of BSDEs, we have the following estimations: $\mathbb{E}\int_0^T|\bar{x}^{(N)}-x_0|^2dt=O\left(\frac{1}{N}\right)$, $\frac{1}{N}\sum_{j=1}^N \mathbb{E}\int_0^T|\sum_{i=1}^N \bar{z}_{i j}|^2 dt=O\left(\frac{1}{N}\right)$.
	
	For any $u_i(\cdot) \in \mathcal{U}_i^c$, let $\tilde{u}_i(\cdot)=u_i(\cdot)-\bar{u}_i(\cdot)$, $\tilde{x}_i(\cdot)=x_i(\cdot)-\bar{x}_i(\cdot)$, $\tilde{z}_{ij}(\cdot)=z_{ij}(\cdot)-\bar{z}_{ij}(\cdot)$
	where $(x_i(\cdot),z_{ij}(\cdot))$ denote the state processes corresponding to $u_i(\cdot)$. 
	Then, $(\tilde{x}_i(\cdot),\tilde{z}_{ij}(\cdot))$ satisfies
	$$
	\left\{\begin{aligned}
		d\tilde{x}_i&=\left[A\tilde{x}_i+B\tilde{u}_i\right]dt+\sum_{j=1}^N \tilde{z}_{ij}dW_j,\\
		\tilde{x}_i(T)&=0.
	\end{aligned}\right.
	$$
	Then, from (\ref{cost}), we have 
	\begin{equation}\label{asymptotic property}
		\mathcal{J}_i(u_i(\cdot);\bar{u}_{-i}(\cdot))-\mathcal{J}_i(\bar{u}_i(\cdot);\bar{u}_{-i}(\cdot))=\widetilde{\mathcal{J}}_i(\tilde{u}_i(\cdot);\bar{u}_{-i}(\cdot))+\mathcal{I}_i,
	\end{equation}
	where
	$$
	\begin{aligned}
		\widetilde{\mathcal{J}}_i(\tilde{u}_i(\cdot);\bar{u}_{-i}(\cdot))&=\frac{1}{2} \mathbb{E}\left[\int_0^T\left(\left\|\tilde{x}_i-\frac{\Gamma_1}{N}\tilde{x}_i\right\|_Q^2+\left\|\tilde{u}_i\right\|_R^2+\sum_{j=1}^N \left\|\tilde{z}_{i j}\right\|_{S_j}^2\right) d t+\left\|\tilde{x}_i(0)-\frac{\Gamma_0}{N} \tilde{x}_i(0)\right\|^2_G\right],
	\end{aligned}
	$$
	\begin{equation}\label{I_i equation}
	\begin{aligned}
		\mathcal{I}_i&= \mathbb{E}\left[ \int_0^T\left(\left(\bar{x}_i-\Gamma_1 \bar{x}_i^{(N)}-\eta_1\right)^\top Q\left(\tilde{x}_i-\frac{\Gamma_1}{N} \tilde{x}_i\right)+\bar{u}_i^\top R \tilde{u}_i+\sum_{j=1}^N \bar{z}_{i j}^\top S_j \tilde{z}_{i j}\right) d t \right.\\
		&\qquad +\left(\bar{x}_i(0)-\Gamma_0 \bar{x}_i^{(N)}(0)-\eta_0\right)^\top G\left(\tilde{x}_i(0)-\frac{\Gamma_0}{N}\tilde{x}_i(0)\right)\Bigg].
	\end{aligned}
    \end{equation}
	By Assumption (\ref{A1}), we have $\widetilde{\mathcal{J}}_i(\tilde{u}_i(\cdot);\bar{u}_{-i}(\cdot)) \geqslant 0$. 
	
	Let $\bar{p}_i(\cdot)=\bar{\Pi}(\cdot) \bar{x}_i(\cdot)+\bar{M}(\cdot) x_0(\cdot)+\bar{\zeta}_i(\cdot)$ and applying It\^o's formula to $\bar{p}_i(\cdot)$, we have
	$$
	\begin{aligned}
		d\bar{p}_i=& \left\{\left(-A^{\top} \bar{\Pi}-Q\right) \bar{x}_i+\left( -A^{\top} M+Q \Gamma_1\right) x_0-A^{\top} \bar{\zeta}_i+Q \eta_1\right\} d t \\
		& + \bar{\Pi} \sum_{j=1}^N \bar{z}_{i j} d W_j -\sum_{j=1}^N \left(S_j+\bar{\Pi}\right)\left(I_n+\bar{\Sigma} S_j\right)^{-1} \bar{\beta}_{i j} d W_j.
	\end{aligned}
	$$
	Using It\^o's formula to $\langle\tilde{x}_i^\top(\cdot),\bar{p}_i(\cdot)\rangle$, we derive
	$$
	\begin{aligned}
		&\mathbb{E}\left[\tilde{x}_i^\top (0)G\left(\bar{x}_i(0) +\Gamma_0 x_0(0) +\eta_0\right)\right]\\
		& =\mathbb{E}\Bigg[\Bigg( \int_0^T-\tilde{x}_i^{\top} Q \bar{x}_i+\tilde{x}_i^\top Q \Gamma_1 x_0+\tilde{x}_i^\top Q \eta_1+\tilde{u}_i^{\top} B^\top \left(\bar{\Pi} \bar{x}_i+\bar{M} x_0+\bar{\zeta}_i\right) \\
		&\qquad \left.\left. +\sum_{j=1}^N \tilde{z}_{i j}^\top \left(\bar{\Pi} \bar{z}_{i j}-\left(S_j+\bar{\Pi}\right)\left(I_n+\bar{\Sigma} S_j\right)^{-1} \bar{\beta}_{i j}\right)\right) d t \right].
	\end{aligned}
	$$
	Therefore, (\ref{I_i equation}) becomes
	$$
		\begin{aligned}
			\mathcal{I}_i&= \mathbb{E}\left[\left( \int_0^T\left(\bar{x}_i-\Gamma_1 \bar{x}_i^{(N)}-\eta_1\right)^\top Q\left(\tilde{x}_i-\frac{\Gamma_1}{N} \tilde{x}_i\right)+\bar{u}_i^\top R \tilde{u}_i+\sum_{j=1}^N \bar{z}_{i j}^\top S_j \tilde{z}_{i j}\right)dt \right. \\
			&\qquad \left.+\left(x_0(0)-\bar{x}_i^{(N)}(0)\right)^\top \Gamma_0^\top G\left(I_n-\frac{\Gamma_0}{N}\right)\tilde{x}_i(0)\right]\\
			&=\mathbb{E}\left[ \int_0^T\left(\left(\bar{x}_i-\Gamma_1 \bar{x}_i^{(N)}-\eta_1\right)^\top Q\left(\tilde{x}_i-\frac{\Gamma_1}{N} \tilde{x}_i\right)+\bar{u}_i^\top R \tilde{u}_i+\sum_{j=1}^N \bar{z}_{i j}^\top S_j \tilde{z}_{i j}\right) dt\right. \\
			&\qquad +\int_0^T\left(-\tilde{x}_i^\top Q \bar{x}_i+\tilde{x}_i^\top Q \Gamma_1 x_0+\tilde{x}_i^\top Q \eta_1+\tilde{u}_i^\top B^\top \left(\bar{\Pi} \bar{x}_i+\bar{M} x_0+\bar{\zeta}_i\right)\right. \\
			&\qquad \left.+\sum_{j=1}^N \tilde{z}_{i j}^\top \left(\bar{\Pi} \bar{z}_{i j}-\left(S_j+\bar{\Pi}\right)\left(I_n+\bar{\Sigma} S_j\right)^{-1} \bar{\beta}_{i j}\right)\right) d t  \\
			&\qquad -\left(\bar{x}_i(0)-\Gamma_0 x_0(0)-\eta_0\right)^\top G\frac{\Gamma_0}{N}\tilde{x}_i(0)\left.+\left(x_0(0)-\bar{x}_i^{(N)}(0)\right)^\top \Gamma_0^\top G\left(I_n-\frac{\Gamma_0}{N}\right)\tilde{x}_i(0)\right]\\
            &=\mathbb{E}\Bigg[ \int_0^T\left(-\left(\bar{x}_i-\Gamma_1 \bar{x}_i^{(N)}-\eta_1\right)^\top Q\frac{\Gamma_1}{N} \tilde{x}_i+ \left(\bar{x}_i^{(N)}-x_0\right)\Gamma_1 Q \tilde{x}_i \right.\\
		\end{aligned}
	$$
    $$
		\begin{aligned}		
            &\qquad \left.+\sum_{j=1}^{N} \bar{z}_{i j}^\top S_j \tilde{z}_{i j} +\sum_{j=1}^N \tilde{z}_{i j}^\top \left(\bar{\Pi} \bar{z}_{i j}-\left(S_j+\bar{\Pi}\right)\left(I_n+\bar{\Sigma} S_j\right)^{-1} \bar{\beta}_{i j}\right)\right)dt \\
			&\qquad -\left(\bar{x}_i(0)-\Gamma_0 x_0(0)-\eta_0\right)^\top G\frac{\Gamma_0}{N}\tilde{x}_i(0)\left.+\left(x_0(0)-\bar{x}_i^{(N)}(0)\right)^\top \Gamma_0^\top G\left(I_n-\frac{\Gamma_0}{N}\right)\tilde{x}_i(0)\right].
			\end{aligned}
	$$
	Noticing that
	$$
	\beta_{i j}=\left( I_n+\Sigma S_j\right) z^*_{ij}+\frac{K}{N} S_j \sum_{i=1}^N z^*_{i j},
	$$
	then, we can derive
	$$
    \begin{aligned}
	&\sum_{j=1}^N \bar{z}_{i j}^\top S_j \tilde{z}_{i j} +\sum_{j=1}^N \tilde{z}_{i j}^\top\left(\bar{\Pi} \bar{z}_{ij}-\left(S_j+\bar{\Pi}\right)\left(I_n+\bar{\Sigma} S_j\right)^{-1} \bar{\beta}_{i j}\right)  \\
	&=\sum_{j=1}^N \tilde{z}_{i j}^\top \left(S_j+\bar{\Pi}\right) \bar{z}_{ij}-\sum_{j=1}^N \tilde{z}_{ij}^\top\left(S_j+\bar{\Pi}\right)\left(I_n+\bar{\Sigma} S_j\right)^{-1} \left(\bar{\beta}_{i j}-\beta_{i j}\right)\\
	&\quad -\sum_{j=1}^N \tilde{z}_{ij}^\top \left(S_j+\bar{\Pi}\right)\left(I_n+\bar{\Sigma} S_j\right)^{-1} \left[\left( I_n+\Sigma S_j\right) z^*_{ij}+\frac{K}{N} S_j \sum_{i=1}^N z^*_{i j }\right]\\
	&=\sum_{j=1}^N \tilde{z}_{i j}^\top \left(S_j+\bar{\Pi}\right) \bar{z}_{ij}-\sum_{j=1}^N \tilde{z}_{ij}^\top \left(S_j+\bar{\Pi}\right)\left(I_n+\bar{\Sigma} S_j\right)^{-1} \left(\bar{\beta}_{i j}-\beta_{i j}\right)\\
	&\quad -\sum_{j=1}^N \tilde{z}_{ij}^\top \left(S_j+\bar{\Pi}\right)\left(I_n+\bar{\Sigma} S_j\right)^{-1} \left[\left( I_n+\bar{\Sigma} S_j\right) z^*_{ij}+\left(\bar{\Sigma}-\Sigma\right) S_jz^*_{ij}+\frac{K}{N} S_j \sum_{i=1}^N z^*_{i j }\right]\\
	&=\sum_{j=1}^N \tilde{z}_{i j}^\top \left(S_j+\bar{\Pi}\right) \left(\bar{z}_{ij} -z^*_{ij}\right)-\sum_{j=1}^N \tilde{z}_{ij}^\top \left(S_j+\bar{\Pi}\right)\left(I_n+\bar{\Sigma} S_j\right)^{-1} \left(\bar{\beta}_{i j}-\beta_{i j}\right)\\
	&\quad -\sum_{j=1}^N \tilde{z}_{ij}^\top \left(S_j+\bar{\Pi}\right) \left[ \left(\bar{\Sigma}-\Sigma\right) S_jz^*_{ij}+\frac{K}{N} S_j \sum_{i=1}^N z^*_{i j }\right].
    \end{aligned}   
	$$
	Substitute the above equation into the equation of $\mathcal{I}_i$, we can finally obtain
	$$
	\begin{aligned}
		\mathcal{I}_i&=\mathbb{E}\Bigg[ \int_0^T\Bigg[-\left(\bar{x}_i-\Gamma_1 \bar{x}_i^{(N)}-\eta_1\right)^\top Q\frac{\Gamma_1}{N} \tilde{x}_i+ \left(\bar{x}_i^{(N)}-x_0\right)\Gamma_1 Q \tilde{x}_i \\
	    &\qquad +\sum_{j=1}^N \tilde{z}_{i j}^{\top} \left(S_j+\bar{\Pi}\right) \left(\bar{z}_{ij} -z^*_{ij}\right)-\sum_{j=1}^N \tilde{z}_{ij}^\top\left(S_j+\bar{\Pi}\right)\left(I_n+\bar{\Sigma} S_j\right)^{-1} \left(\bar{\beta}_{i j}-\beta^*_{i j}\right)\\
	    &\qquad -\sum_{j=1}^N\tilde{z}_{ij}^\top\left(S_j+\bar{\Pi}\right) \left[ \left(\bar{\Sigma}-\Sigma\right) S_jz^*_{ij}+\frac{K}{N} S_j \sum_{i=1}^N z^*_{i j }\right]\Bigg]dt\\
	    &\qquad -\left(\bar{x}_i(0)-\Gamma_0 x_0(0)-\eta_0\right)^\top G\frac{\Gamma_0}{N}\tilde{x}_i(0)+\left(x_0(0)-\bar{x}_i^{(N)}(0)\right)^\top \Gamma_0^\top G\left(I_n-\frac{\Gamma_0}{N}\right)\tilde{x}_i(0)\Bigg]\\
	    &=O\left(\frac{1}{\sqrt{N}}\right).
	    \end{aligned}
	    $$
	Therefore, combining (\ref{asymptotic property}) we have
	$$
	\mathcal{J}_i(\bar{u}_i(\cdot);\bar{u}_{-i}(\cdot)) \leqslant \mathcal{J}_i(u_i(\cdot);\bar{u}_{-i}(\cdot))+O \left(\frac{1}{\sqrt{N}}\right).
	$$
	Thus, $(\bar{u}_1(\cdot),\ldots,\bar{u}_N(\cdot))$ is an $\epsilon$-Nash equilibrium.
\end{proof}

\section{Numerical examples}

In this section, we give a numerical example with certain particular coefficients to simulate our theoretical results. We set the number of agents to 300, i.e., $N=300$ and the terminal time is $1$. The simulation parameters are given as follows: $A=0.1,B=2,Q=1,R=5,G=2,\Gamma_1=0.5,\eta_1=1,\Gamma_0=1,\eta_0=1$. And for $i=1,\ldots,N$, $S_i=1,\xi_i=W_i(T)$. By the  Euler's method, we plot the solution curves of Riccati equations (\ref{Sigma limit equation}), (\ref{K limit equation}), (\ref{Pi limit equation}) and (\ref{M limit equation})
in Figure \ref{fig:Riccati}. By the Monte Carlo method, the figures of $\bar{\zeta}_i(\cdot)$ and optimal state $\bar{x}_i(\cdot)$ are shown in Figure \ref{fig:bar_zeta_i} and Figure \ref{fig:bar_x_i}, respectively. Further, we also generate the dynamic simulation of optimal decentralized control $\bar{u}_i(\cdot)$, shown in Figure \ref{fig:bar_u_i}.

\begin{figure}[htbp] 
	\centering  
	\begin{minipage}{0.45\textwidth} 
		\centering  
		\includegraphics[width=\textwidth]{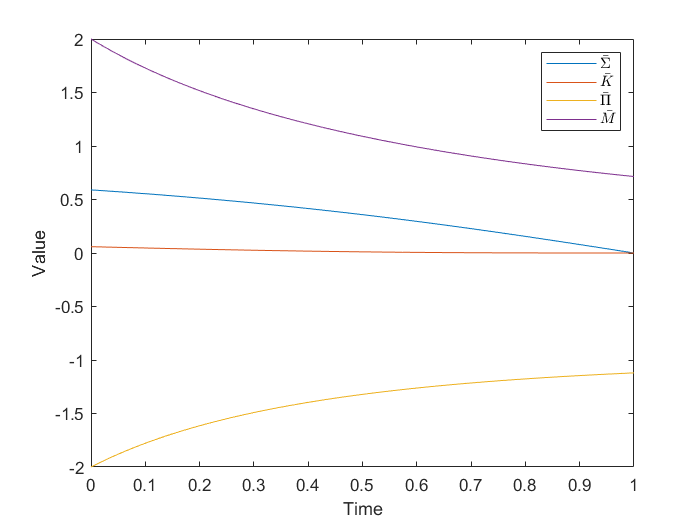} 
		\caption{\centering The solution curve of $\bar{\Sigma}(\cdot)$, $\bar{K}(\cdot)$, $\bar{\Pi}(\cdot)$ and $\bar{M}(\cdot)$} 
		\label{fig:Riccati} 
	\end{minipage}  
	\hspace{0.05\textwidth} 
	\begin{minipage}{0.45\textwidth}  
		\centering  
		\includegraphics[width=\textwidth]{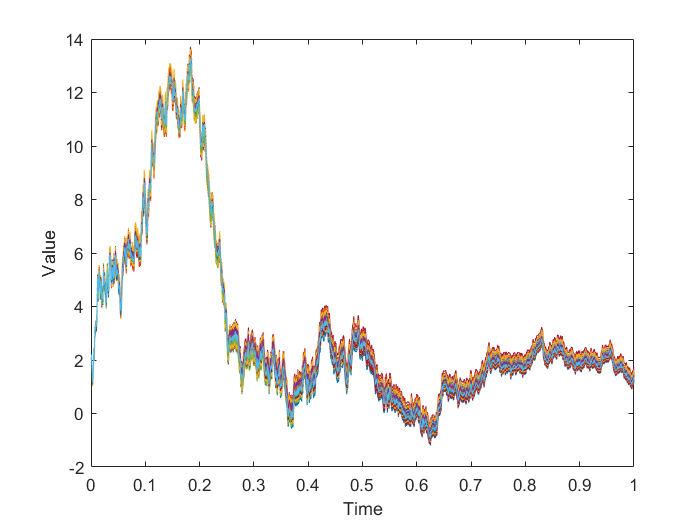} 
		\caption{\centering The solution curve of $\bar{\zeta}_i(\cdot), i=1,\cdots,300$} 
		\label{fig:bar_zeta_i} 
	\end{minipage}  
	\end{figure}  

\begin{figure}[htbp] 
	\centering  
	\begin{minipage}{0.45\textwidth} 
		\centering  
		\includegraphics[width=\textwidth]{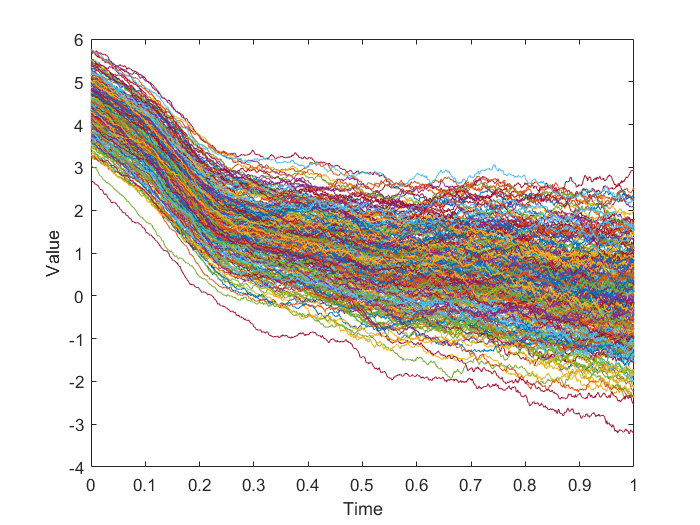} 
		\caption{\centering The solution curve of $\bar{x}_i(\cdot), i=1,\cdots,300$}  
		\label{fig:bar_x_i} 
	\end{minipage}  
	\hspace{0.05\textwidth} 
	\begin{minipage}{0.45\textwidth}  
		\centering  
		\includegraphics[width=\textwidth]{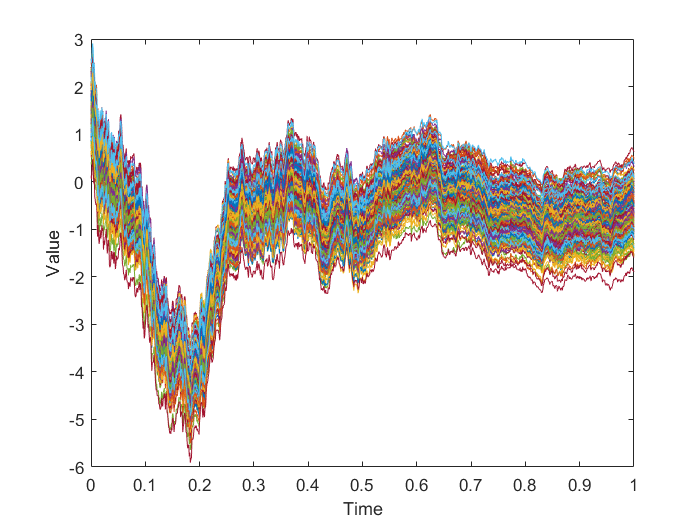} 
		\caption{\centering The solution curve of $\bar{u}_i(\cdot), i=1,\cdots,300$}  
		\label{fig:bar_u_i}   
	\end{minipage}  
\end{figure}  

\section{Conclusion}

In this paper, we have studied the dynamic optimization of large-population system with linear BSDEs. We adopts a direct approach to solve this large-population problem and obtain the decentralized strategy. Our present work suggests various future research directions. For example, (i) To study the backward MFG with indefinite control weight (this will formulate the mean-variance analysis with relative performance in our setting); (ii) To study the backward MFG with integral-quadratic constraint, we can attempt to adopt the method of Lagrange multipliers and the Ekeland variational method; (iii) To consider the direct method to solve mean field problem with the state equation contains state average term. We plan to study these issues in our future works.

\end{document}